\def\End{\mathop{\text{End}}}
\def\Z{\mathbb Z}
\def\Ann{\text{Ann}}
\def\Der{\mathrm{Der \,}}
\def\SDer{\mathrm{SDer \,}}
\def\ad{\mathrm{ad}}
\def\tr{\text{tr}}
\def\lan{\mathrm{lan}}
\def\ran{\mathrm{ran}}
\newcommand{\gl}{\mathfrak{gl}}
\newcommand{\slinear}{\mathfrak{sl}}
\newcommand{\ort}{\mathfrak{o}}
\newcommand{\simplectic}{\mathfrak{sp}}
\newtheorem{prop}{Proposition}[section]
\newtheorem{lem}[prop]{Lemma}
\newtheorem{thm}[prop]{Theorem}
\newtheorem{cor}[prop]{Corollary}
\newtheorem{definitions}[prop]{Definitions}
\newtheorem{ejm}[prop]{Example}
\newtheorem{vacio}[prop]{ }
\newtheorem{rem}[prop]{Remark}
\numberwithin{equation}{section}
\newcommand{\R}{\mathbb{R}}
\newcommand{\C}{\mathbb{C}}
\newcommand{\M}{\mathbb{M}}
\newcommand{\QAn}{\mathrm{QAnn}}
\newcommand{\Qan}{\mathrm{QAnn}_}
\newcommand{\ann}{\mathrm{ann}}
\newcommand{\der}{\mathrm{Der}}
\newcommand{\dergr}{\mathrm{Der}_{gr}}
\newcommand{\Igre}{\mathcal{I}_{gr-e}}
\newcommand{\Ad}{\mathrm{ad}_}
\newcommand{\Hom}{\mathrm{Hom}}
\newcommand{\sk}{\mathrm{skew}}
\newcommand{\sym}{\mathrm{sym}}
\begin{document}

\title{Martindale algebras of quotients of graded algebras}

\thanks{Partially supported by the MINECO through Project MTM2010-15223 and the Junta de Andaluc\'ia and fondos FEDER through projects FQM 336 and FQM 02467, and by a grant by the Junta de Andaluc\'ia (BOJA n. 12 de 17 de enero de 2008).}

\thanks{{\em Math. Subj. Class.} 17B60, 16W25, 16S90, 16W10} 
\thanks{{\em Key words}: Martindale ring of quotients, graded algebra, maximal algebra of quotients, finitary, strongly nondegenerate}

\author{H. Bierwirth, C. Mart\'in Gonz\'alez}
\address{Departamento de \'Algebra, Geometr\'ia y Topolog\'ia, Universidad de 
Málaga,  29071 Málaga, Spain}
\email{H. Bierwirth, hannes@uma.es. C. Mart\'in Gonzalez, candido@apncs.cie.uma.es. M. Siles Molina, msilesm@uma.es}
\author{J. S\'anchez Ortega}
\address{Department of Mathematics and Statistics.
2025 TEL Building
York University
North York, Ontario M3J 1P3, Canada}
\email{J. S\'anchez Ortega, jsanchez@fields.utoronto.ca.}
\author{M. Siles Molina}


\begin{abstract}

The motivation for this paper has been to study the relation between the zero component of the maximal graded algebra of
quotients and the maximal graded algebra of quotients of the zero component, both in the Lie case and when considering
Martindale algebras of quotients in the associative setting. We apply our results to prove that the finitary complex Lie algebras are (graded) strongly nondegenerate and compute their maximal algebras of quotients.

\end{abstract}

\maketitle


\section*{Introduction}

Algebras of quotients in the associative setting were introduced in order to find overalgebras more tractable and having ``good properties'' inherited by the algebras they contain as algebras of quotients. This is the case, for example, of the classical algebra of quotients of a semiprime noetherian algebra 
and, for a wider class of algebras, of the Martindale algebra of quotients, $Q_s(A)$, of a semiprime associative algebra $A$, whose study has played an increasingly important role in a deeper knowledge of prime and semiprime algebras.

With this spirit in mind  were introduced the notions of algebra of quotients of a Lie algebra (see \cite{msm}) and that of graded algebra of quotients of a Lie algebra (in \cite{ss}). Maximal (and graded maximal) algebras of quotients do exist and are computed for some important (graded) Lie algebras arising from associative algebras in \cite{bisi, bpss}. 

The authors in \cite{bisi, bpss, CabreraSanchez, ps, PS_SND, ss}  also answered to some natural questions concerning these notions such as when the (graded) maximal algebra of quotients of an (graded) essential ideal $I$ of a (graded) Lie algebra $L$ coincides with the (graded) maximal algebra of quotients of $L$, if taking maximals is a closed operation, or if the grading is inherited by maximal algebras of quotients.

There is however a question which still remained opened. {(Q)}: Does the maximal algebra of quotients of the zero component coincide with the zero component of the maximal algebra of quotients when there is a grading on the underlying algebra? And also  the analogous  in the associative setting (unanswered as far as we know), i.e., {(Q')}:  Is the Martindale algebra of quotients of the zero component of a graded associative algebra $A$ the zero component of the Martindale algebra of quotients of $A$?

Both questions are closely related because, as we  show in Section 3 (see Proposition \ref{QAA=QA}), for $A$ a 3-graded simple associative algebra, the maximal algebra of quotients of $A^-/Z_{A}$ coincides with $Q_s(A)^-/Z_{Q_s(A)}$, and we get a similar result when the algebra $A$ has an involution (see Proposition \ref{QKK=QK}), i.e.,  the maximal algebra of quotients of $K_A/Z_{K_A}$ is just $K_{Q_s(A)}/Z_{K_{Q_s(A)}}$ (here, for $B$ an associative algebra, $Z_B$ denotes the center of  $B$ and when $B$  has an involution $K_B$ denotes its Lie algebra of skew-symmetric elements). 

In Section 3 we also give a positive answer to question {(Q)}, while the answer (also affirmative) for  {(Q')} is given in Section 2. The idea to get this result is to show first that for an idempotent $e\in Q_s(A)$ the corner of the Martindale algebra of quotients of $A$ is the Martindale algebra of quotients of the corner $eAe$, whenever $eA+Ae\subseteq A$. In fact, this is a corollary of a more general result (Theorem \ref{isomorfismolocales}) which states that taking local algebras at elements and Martindale symmetric algebras of quotients commute (the notion of local algebra at an element is a generalization of the concept of corner). Then we show that every finite $\mathbb{Z}$-grading in a semiprime associative algebra $A$ (with more extra conditions) is induced by a set of orthogonal idempotents lying in $Q_s(A)$ whose sum is 1, and the proof follows. This is precisely what we do in Section 2.

Section 1 is devoted to  study the relationship of graded strong nondegeneracy of a graded Lie algebra and that of their zero component and of their graded ideals.

Finally, in Section 4 we apply some of the results of the paper to establish that every finitary simple Lie algebra over an algebraically closed field of characteristic $0$ admits a 3-grading and is strongly nondegenerate. A computation of their maximal algebras of quotients is also provided.


\section{Graded ideals in strongly nondegenerate Lie algebras}

Let $\Phi$ be an associative and commutative ring. Recall that a \textit{Lie algebra} over $\Phi$ (or a $\Phi$-\textit{algebra}) is a $\Phi$-module $L$ with a bilinear map $[\ ,\ ]: L \times L \rightarrow L$, denoted by $(x,y) \mapsto [x,y]$ and called the \textit{bracket} of $x$ and $y$ such that $[x,x]=0$ for all $x \in L$ and $[ \ ,\ ]$ satisfies the \emph{Jacobi identity}, i.e., $[x,[y,z]]+[y,[z,x]]+[z,[x,y]]=0$ for every $x, y, z \in L$.
Given a Lie algebra $L$ and an element $x \in L$ we define $\Ad x : L \rightarrow L$ by $\Ad x (y) = [x,y]$. The associative subalgebra of $\End (L)$ generated by $\Ad x$, for all $x \in L$, is denoted by $A(L)$.

Let $S$ be an associative or Lie $\Phi$-algebra and $G$ an abelian group. We will say that $S$ is $G$-\emph{graded} if $S=\oplus_{\sigma \in
G}S_\sigma,$ where $S_\sigma $ is a $\Phi$-submodule of $S$ and $S_{\sigma}S_{\tau} \subseteq S_{\sigma\tau}\mbox{ for every }\sigma,\tau\in G$. Each element $x$
of $S_\sigma$ is called a \textit{homogeneous element}. We will say that $\sigma$
is the \emph{degree of}  $x$ and will write $\deg x = \sigma$. The set
of all homogeneous elements of $S$ will be denoted by $h(S)$. For a subset $X \subseteq S$, $h(X) = h(S) \cap X$ denotes the set of homogeneous elements of $X$.

If $S$ is a $G$-graded algebra, then a subset $X \subseteq S$ is said to be \textit{graded} if whenever $x=\sum_{\sigma\in
G} x_\sigma \in X$ we have $x_\sigma \in X$, for every $\sigma\in G$.

From now on all our algebras will be considered $\Phi$-algebras. Let $T$ be a subalgebra of an algebra $S$. A linear map
$\delta\colon T\to S$ is called a \emph{derivation} if $\delta (xy)=\delta (x)y + x \delta (y)$ for all $x, y\in T$, where juxtaposition denotes the product in $S$. The set of all derivations from $T$ to $S$, which is a $\Phi$-module, is denoted by $\Der (T,\, S)$. By $\der(S)$ we understand the $\Phi$-module $\der(S,\, S)$; it becomes a Lie algebra if we define the
bracket by $[\delta, \mu]=\delta \mu- \mu\delta$ for $\delta, \mu\in
\der(S)$.

A derivation $\sigma \colon S \to S$ is called \emph{inner} if $\sigma = \ad_x$ for some $x \in S$. The Lie algebra of all inner derivations of $S$ will be denoted by $\mathrm{Inn}(S)$.

Suppose now that the algebra $S$ is a $G$-graded algebra, and
that $T$ is a graded subalgebra of $S$. We say that a derivation $\delta \in \Der (T,\, S)$ has \emph{degree} $\sigma \in G$ if it satisfies
$\delta (T_\tau)\subseteq S_{\tau+ \sigma}$ for every $\tau\in G$.
In this case, $\delta$ is called a \emph{graded derivation
of degree} $\sigma$.  Denote by $\dergr (T,\, S)_\sigma$ the set
of all graded derivations of degree $\sigma$. Clearly, it becomes a $\Phi$-module by defining operations in the natural way and, consequently, $\dergr
(T,\,S):=\oplus_{\sigma\in G}\dergr (T,\, S)_\sigma $ is also a
$\Phi$-module.

The Lie algebra of derivations $\Der A$ of an associative algebra $A$ plays an important role throughout the paper. Also the following subalgebra of $\Der A$ is used: if $A$ has an involution $*$ then the set of all derivations of $A$ which commute with the involution is denoted by $\mathrm{SDer}\, A$; it is easy to see that it is a Lie algebra.

Every associative algebra $A$ gives rise to a Lie algebra $A^-$ by defining $[a,b]=ab-ba,$ where juxtaposition denotes the associative product.

A derivation $\delta$ of $A^-$ is called a \emph{Lie derivation}. Note that every derivation of $A$ is also a Lie derivation. The converse is not true in general.

Let  $X$ and $Y$ be subsets of a Lie algebra $L$.
The
\textit{annihilator of} $Y$ \textit{in} $X$ is
defined as the set $\Ann_X(Y):=\{x \in X \ \vert \  [x, Y]=0\}$ while  the \textit{quadratic annihilator of} $Y$
\textit{in} $X$ is the set $\QAn_X(Y):=\{x \in X \ \vert \  [x, [x, Y]]=0\}.$ The \textit{center} of $L$, denoted by $Z(L)$ or $Z_L$, is $\Ann_L(L)$.

We say that a (graded) Lie algebra $L$ is (\emph{graded}) \emph{semiprime}
if for every nonzero (graded) ideal $I$ of $L$, $I^2 = [I, I] \neq 0$. It is easy to see that if $L$ is (graded) semiprime then
$I\cap \Ann (I) = 0$ for every (graded) ideal $I$ of $L$ \cite[Lemma 1.1\,(ii)]{ss}.
Next, $L$
is said to be (\emph{graded}) \emph{prime} if for nonzero (graded) ideals
$I$ and $ J$ of $L$, $[I, J]\neq 0$. A (graded) ideal
$I$ of $L$
is said to be ({\it graded}) {\it essential} if its intersection with
any nonzero (graded) ideal is again a nonzero (graded) ideal. If $L$ is (graded) semiprime, then a (graded) ideal $I$ of
$L$ is (graded) essential if and only
if $\Ann (I) = 0$ \cite[Lemma 1.1\,(iii)]{ss}. It is easy to see that in
this case $I^2$ is also a (graded) essential ideal.
Further, the intersection of (graded) essential ideals is clearly again a (graded) essential
ideal.
Note also that a nonzero (graded) ideal of a (graded) prime algebra is
automatically (graded) essential.

An element $x$ of a Lie algebra $L$ is an \emph{absolute
zero divisor} if $(\Ad x)^2=0$, equivalently, if $x \in \QAn \, L$. The algebra $L$ is said to be (\emph{graded}) \emph{strongly nondegenerate} if it does not contain nonzero
(homogeneous) absolute zero divisors. It is obvious from the
definitions that (graded) strongly nondegenerate Lie algebras are
(graded) semiprime, but the converse does not hold (see \cite[Remark
1.1]{msm}).

The notions of (graded) semiprimeness, (graded) primeness, and essentiality of an (graded) ideal for associative algebras are defined in exactly the same way as for Lie algebras, just by replacing the bracket by the associative product.

\begin{prop}\label{grequiv}
Let $L=\oplus_{\sigma \in G}L_{\sigma}$ be a graded Lie algebra over an ordered group $G$ with a finite grading. Then  $L$ is graded strongly nondegenerate if and only if $L$ is strongly nondegenerate.
\end{prop}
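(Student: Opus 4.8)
The plan is to prove the nontrivial implication, that graded strong nondegeneracy implies strong nondegeneracy; the converse is immediate, since a homogeneous absolute zero divisor is in particular an absolute zero divisor, so an $L$ with no nonzero absolute zero divisors has no nonzero homogeneous ones either.

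For the forward direction, suppose $L$ is graded strongly nondegenerate and let $x\in\QAn L$, i.e. $[x,[x,y]]=0$ for all $y\in L$; the goal is $x=0$. Write $x=x_{\sigma_1}+\cdots+x_{\sigma_n}$ with each $x_{\sigma_i}\in L_{\sigma_i}\setminus\{0\}$ and, using the total order on $G$, arrange $\sigma_1<\sigma_2<\cdots<\sigma_n$; if $x\neq 0$ then $n\geq 1$. Here the finiteness of the grading (or, more modestly, the fact that $L$ is the direct sum of its homogeneous components) makes this a finite sum, and the order on $G$ singles out a largest degree $\sigma_n$.

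The key step is to isolate the leading homogeneous component of the identity $[x,[x,y]]=0$. Fix a homogeneous $y_\tau\in L_\tau$. Expanding, $[x,[x,y_\tau]]=\sum_{i,j}[x_{\sigma_i},[x_{\sigma_j},y_\tau]]$, where the summand $[x_{\sigma_i},[x_{\sigma_j},y_\tau]]$ is homogeneous of degree $\sigma_i+\sigma_j+\tau$. Since $G$ is an ordered group, $\sigma_i+\sigma_j\leq\sigma_n+\sigma_n$ with equality only when $\sigma_i=\sigma_j=\sigma_n$, i.e. $i=j=n$; hence the component of $[x,[x,y_\tau]]$ in degree $2\sigma_n+\tau$ is exactly $[x_{\sigma_n},[x_{\sigma_n},y_\tau]]$. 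As $[x,[x,y_\tau]]=0$, all its homogeneous components vanish, so $[x_{\sigma_n},[x_{\sigma_n},y_\tau]]=0$; and because $\tau$ and $y_\tau$ were arbitrary and $L=\bigoplus_\tau L_\tau$, this yields $[x_{\sigma_n},[x_{\sigma_n},y]]=0$ for all $y\in L$. Thus $x_{\sigma_n}$ is a homogeneous absolute zero divisor, so $x_{\sigma_n}=0$ by graded strong nondegeneracy, contradicting $x_{\sigma_n}\neq 0$. Therefore $n=0$ and $x=0$, proving $L$ strongly nondegenerate.

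I expect no real obstacle here; the only points demanding care are the order-arithmetic claim that $\sigma_i+\sigma_j=2\sigma_n$ forces $i=j=n$ (using that the order is total and translation invariant), and the reduction of the quadratic-annihilator condition from arbitrary $y\in L$ to homogeneous $y_\tau$, which is legitimate since $L$ is the direct sum of its graded pieces. One could alternatively run an induction on the number of nonzero homogeneous components of $x$, peeling off the top component at each stage, but isolating the leading term once already produces the contradiction directly.
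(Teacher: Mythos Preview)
Your proof is correct and follows essentially the same approach as the paper: both argue by contradiction, pick the homogeneous component $x_{\sigma_n}$ of $x$ of maximal degree (using the order on $G$), observe that for homogeneous $y$ the only contribution to $[x,[x,y]]$ in degree $2\sigma_n+\deg y$ is $[x_{\sigma_n},[x_{\sigma_n},y]]$, and conclude that this top component is a homogeneous absolute zero divisor, hence zero. Your write-up is in fact a bit more explicit about the order-arithmetic step $\sigma_i+\sigma_j\leq 2\sigma_n$ with equality only if $i=j=n$, which the paper leaves implicit.
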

\begin{proof}
Suppose first that $L$ is graded strongly nondegenerate and suppose there exists a nonzero element $x \in \QAn (L)$. Since $G$ is ordered and the grading is finite, in the decomposition of $x = \sum_{\sigma \in G} x_{\sigma}$ there is a component $x_{\tau}$ of maximum degree $x_{\tau} \neq 0$. Let $y$ be a homogeneous element of $L$, then, in the decomposition of $[x,[x,y]]$ into homogeneous components, the element $[x_{\tau},[x_{\tau},y]]$ is the only with degree $2\tau + \deg y$. Since $[x,[x,y]]=0$, it follows that $[x_{\tau},[x_{\tau},y]]=0$. Applying that $L$ is graded nondegenerate we have that $x_{\tau}=0$, a contradiction. We have proved that $\QAn (L) = 0$ and therefore $L$ is strongly nondegenerate.

The converse is obvious.
\end{proof}

\begin{prop}\label{herencia1}
Let $L=L_{-1}\oplus L_0\oplus L_1$ be a 3-graded Lie algebra with $Z(L)=0$ and $L_0=[L_1, L_{-1}]$, and
suppose that $\Phi$ is 2 and 3-torsion free. If $L_0$ is a strongly nondegenerate Lie algebra, then $L$ is a (graded) strongly nondegenerate Lie algebra.
\end{prop}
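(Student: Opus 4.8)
The plan is to reduce, via Proposition~\ref{grequiv} (here $G=\Z$ is ordered and the grading is finite), to showing that $L$ contains no nonzero homogeneous absolute zero divisor, and then to treat the three possible degrees in turn. So suppose $x\in h(L)$, $x\neq 0$, with $(\ad x)^2=0$. If $\deg x=0$, then $[x,L_0]\subseteq L_0$, so $(\ad x)^2=0$ restricts to $(\ad x)^2=0$ on $L_0$ and hence $x\in\QAn_{L_0}(L_0)$; since $L_0$ is strongly nondegenerate this forces $x=0$, a contradiction. As the hypotheses are symmetric under interchanging $L_1$ and $L_{-1}$, we may therefore assume $\deg x=1$.

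Next I would isolate the elementary part of this case. Since $[x,L_1]\subseteq L_2=0$, once we know $[x,L_{-1}]=0$ we get
\[ [x,L_0]=[x,[L_1,L_{-1}]]\subseteq[[x,L_1],L_{-1}]+[L_1,[x,L_{-1}]]=0, \]
so $[x,L]=0$, i.e.\ $x\in Z(L)=0$, contradicting $x\neq 0$. Thus it suffices to prove $[x,a]=0$ for every $a\in L_{-1}$; and since $[x,a]\in L_0$ and $L_0$ is strongly nondegenerate, it is enough to show that $[x,a]$ is an absolute zero divisor of $L_0$, i.e.\ that $(\ad_{[x,a]})^2$ annihilates $L_0$.

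This last claim is the core of the proof. Fix $a\in L_{-1}$ and set $u=[x,a]\in L_0$. Using the Jacobi identity and $L_{\pm2}=0$ one obtains $[u,c]=[x,[a,c]]$ for $c\in L_1$ and $[u,d]=-[a,[x,d]]$ for $d\in L_{-1}$; combining these with $(\ad x)^2=0$ and dividing by $2$ (legitimate because $\Phi$ is $2$-torsion free) yields the auxiliary identities $[[x,a],[x,b]]=0$ for $b\in L_{-1}$ and $[[x,a],[x,w]]=0$ for $w\in L_0$. Short computations then show that $(\ad_u)^2$ kills $L_1$ and $L_{-1}$. For $w\in L_0=[L_1,L_{-1}]$ one may take $w=[c,d]$, and the second-order Leibniz rule for the derivation $\ad_u$ gives
\[ (\ad_u)^2[c,d]=[(\ad_u)^2c,\,d]+2[[u,c],[u,d]]+[c,\,(\ad_u)^2d]=2[[u,c],[u,d]], \]
so we are reduced to proving $[[u,c],[u,d]]=[[[x,a],c],[[x,a],d]]=0$.

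I expect this identity to be the main obstacle. The components $L_{\pm1}$ are annihilated immediately by the auxiliary identities, but the $L_0$-to-$L_0$ behaviour of $(\ad_u)^2$ is not: applied naively the Jacobi identity merely reshuffles $[[u,c],[u,d]]$ back into itself, so one must combine the auxiliary identities with the relation $L_0=[L_1,L_{-1}]$ and use that $\Phi$ is $2$- and $3$-torsion free to clear the coefficients that arise. Once $[[u,c],[u,d]]=0$ is in hand, $(\ad_u)^2L_0=0$ follows and the argument closes as above, proving $L$ graded strongly nondegenerate and therefore, by Proposition~\ref{grequiv}, strongly nondegenerate. (Alternatively, one can recognise $(L_1,L_{-1})$ as a Jordan pair with quadratic operator $Q_x a=\frac{1}{2}[x,[x,a]]$, note that for $x\in L_1$ the condition $(\ad x)^2=0$ is equivalent to $x$ being an absolute zero divisor of that pair since $L_{\pm2}=0$ makes the remaining components of $(\ad x)^2$ vanish automatically, and invoke the known equivalence---valid under $Z(L)=0$, $L_0=[L_1,L_{-1}]$ and invertibility of $2$ and $3$---between nondegeneracy of $(L_1,L_{-1})$ and that of $L_0$.)
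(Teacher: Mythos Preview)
Your outline is the same as the paper's: reduce to homogeneous absolute zero divisors via Proposition~\ref{grequiv}, dispose of degree $0$ immediately, and for $x\in L_{\pm1}$ show that every $u=[x,a]$ with $a\in L_{\mp1}$ lies in $\QAn_{L_0}(L_0)=0$, after which the Jacobi argument and $Z(L)=0$ finish things exactly as you wrote.

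The difference is at the core step. You try to prove $(\ad_u)^2 L_0=0$ by a direct calculation and you explicitly leave the identity $[[u,c],[u,d]]=0$ unproved, calling it ``the main obstacle''. The paper does not perform this computation at all: it observes that $Z(L)=0$ makes $u\mapsto\ad u$ a monomorphism $L\hookrightarrow A(L)$, and then invokes \cite[Theorem~2.1]{PS_SND}, a general (ungraded) fact stating that if $x\in\QAn_L(L)$ then $(\ad_{[a,x]})^2 L\subseteq \QAn_L(L)$ for \emph{every} $a\in L$. Restricting to $L_0$ gives $(\ad_u)^2 L_0\subseteq \QAn_L(L)\cap L_0\subseteq \QAn_{L_0}(L_0)=0$ in one stroke. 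So your missing identity is a special case of a theorem already available in the literature; the paper simply cites it rather than reproving it. (This is also where the hypothesis that $\Phi$ be $2$- and $3$-torsion free is actually used, via that reference.)

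In short: your strategy is correct and coincides with the paper's, but the step you flag as unfinished is genuinely unfinished in your write-up. If you want a self-contained argument you must either carry out that bracket computation in full or---as the paper does---appeal to \cite[Theorem~2.1]{PS_SND}. Your Jordan-pair alternative is also viable but, as stated, likewise defers the work to an external equivalence rather than closing the gap here.
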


\begin{proof}
In view of Proposition \ref{grequiv} it is sufficient to prove that $L$ is graded strongly nondegenerate. We have to show that $(\Ad {x_i})^2=0$ for $x_i\in L_i$ implies $x_i=0$ ($i= 0, \pm 1$). For $i=0$ there is
nothing to prove. Consider $i=\pm 1$ and $x_i\in L_i$ such that $(\Ad {x_i})^2=0$, that is, $x_i \in
\QAn_{L}(L)$. Observe that being $Z(L)=0$ implies that the map $u\mapsto \Ad u$  gives a monomorphism from
$L$ into $A(L)$; this
allows to apply \cite[Theorem 2.1]{PS_SND} and obtain, for any $a_{-i}\in L_{-i}$, $[[a_{-i},\, x_i],\
[[a_{-i},\, x_i],\, L ]]\subseteq \QAn_{L}(L)$.
 In particular,  $[[a_{-i},\, x_i],\ [[a_{-i},\, x_i],\, L_0
]]\subseteq \QAn_{L_0}(L_0)=0$ (as $L_0$ is strongly nondegenerate). This means $[a_{-i},\, x_i]\in
\QAn_{L_0}(L_0)=0$, and we have just proved $[L_{-i},\, x_i]=0$.

Now we claim that $[L_0, x_i]=0$. To this end, and since $L_0=[L_i, L_{-i}]$, it is enough to check $[[b_i,
b_{-i}], x_i]=0$, for every $b_i\in L_i$, with $i=\pm 1$. Indeed, by Jacobi's identity, $[[b_i, b_{-i}],
x_i]= [[b_i, x_i], b_{-i}]+ [[x_i, b_{-i}],\, b_i ]=0$. Hence we have $[L, x_i]=0$, which implies $x_i=0$
because $Z(L)=0$.
\end{proof}

\begin{ejm}\label{recipfalso}
{\rm  The converse of Proposition \ref{herencia1} is not true: Consider  $L:=\mathfrak{sl}_2(\R)$, the Lie
algebra consisting of matrices in $\mathbb{M}_2(\mathbb{R})$ with zero trace, and the grading:
$$L_{-1}=\mathbb{R}e_{21}, \,L_1= \mathbb{R}e_{12}\, \hbox{ and } \, L_0= \mathbb{R}(e_{11}- e_{22}),$$ where
$e_{ij}$ denotes the matrix having 1 in the $(i, j)$-component and 0 otherwise. Then $L$ is a semisimple Lie
algebra with $Z(L)=0$ and $L_0=[L_1, L_{-1}]$. However, while $L$ is strongly nondegenerate, $L_0$ is not, since $L_0$ is an abelian Lie algebra.}
\end{ejm}

\begin{prop}\label{homcuad}
Let $I$ be a graded ideal of a graded Lie algebra $L$ such that $Z(L)=0$. If $x\in h(\QAn_I(I))$ then $[x,\, [x,\, a]]\in h(\QAn_I(I))$ for every $a\in h(L)$.
\end{prop}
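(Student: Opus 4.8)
The plan is to reduce the statement to a short manipulation of the operators $D:=\ad x$, $A:=\ad a$ and $E:=\ad b$, where $b:=[x,[x,a]]$. First observe that $b\in I$ (since $I$ is an ideal and $x\in I$) and that $b$ is homogeneous (a bracket of homogeneous elements is homogeneous), so it only remains to prove that $[b,[b,w]]=0$ for every $w\in I$; equivalently, that $E^2$ vanishes on $I$.

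By the Jacobi identity, $E=\ad[x,[x,a]]=[\ad x,[\ad x,\ad a]]=[D,[D,A]]=D^2A-2DAD+AD^2$. The key point is that $E$ simplifies drastically on $I$: for $w\in I$ we have $D^2w=[x,[x,w]]=0$ because $x\in\QAn_I(I)$, and also $Aw=[a,w]\in I$ since $I$ is an ideal, whence $D^2(Aw)=0$; therefore $Ew=-2DADw$ for every $w\in I$.

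I would then iterate this identity. Given $w\in I$, the elements $Dw=[x,w]$, $ADw=[a,[x,w]]$ and $Ew=[b,w]$ all lie in $I$ (again because $I$ is an ideal), so applying $Ev=-2DADv$ with $v=Ew$ yields $E^2w=-2DAD(Ew)=-2DAD(-2DADw)=4\,DA\,D^2(ADw)$. Since $ADw\in I$, the hypothesis $x\in\QAn_I(I)$ gives $D^2(ADw)=0$, hence $E^2w=0$. Thus $(\ad b)^2$ annihilates $I$, and together with $b$ being a homogeneous element of $I$ this is exactly the assertion $b\in h(\QAn_I(I))$.

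The only delicate point is the bookkeeping: each time one uses $D^2(\cdot)=0$ one must check that the argument really lies in $I$, and this is precisely where the ideal property of $I$ (together with $x\in I$) enters. Apart from that the argument is purely formal; in particular it needs no linearization of the quadratic annihilator identity, and the hypothesis $Z(L)=0$ does not appear to be used for this particular statement.
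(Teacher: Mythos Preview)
Your argument is correct and rests on the same idea as the paper's: write $E=\ad b=[D,[D,A]]=D^2A-2DAD+AD^2$ with $D=\ad x$, $A=\ad a$, and exploit that $D^2$ vanishes on $I$ while $A$, $D$ and $E$ all preserve $I$. The execution, however, is cleaner than the paper's. You simplify $E$ on $I$ \emph{before} squaring, obtaining $E|_I=-2DAD$ and hence $E^2|_I=4\,DA\,D^2(AD\,\cdot)|_I=0$ in one line. The paper instead expands $E^2$ first, reduces it on $I$ to $-2DADAD^2$, and then devotes a further paragraph---using the embedding $L\hookrightarrow A(L)$ granted by $Z(L)=0$ to turn the element identity $[x,[x,y]]=0$ into the operator identity $[D,[D,Y]]=0$---to conclude $DAD^2|_I=0$. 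That detour is unnecessary, since $D^2|_I=0$ already kills the surviving term immediately. Your closing remark is therefore justified: the hypothesis $Z(L)=0$ enters the paper's proof only through this superfluous step, and your version shows that the proposition holds without it.
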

\begin{proof}
The condition $Z(L) = 0$ implies that the map $L \rightarrow A(L)$ given by $x \mapsto \Ad x$ is a monomorphism of Lie algebras, so we may compute in $A(L)$, which has an associative structure, and then translate the obtained identities to $L$.

To ease the notation, we shall temporarily get rid of the prefix $\ad$ and use capital letters $X, A,$ etc. instead of $\Ad x, \Ad a,$ etc.

An equation involving commutators on $L$ is then translated into the corresponding equation with capital letters and commutators in $A(L)$.

Let $x \in h(\Qan {I} (I))$. Then
\begin{equation}\label{eq1}
[x,[x,y]] = 0 \text{ for all } y \in I, \text{ which implies }
\end{equation}
\begin{equation} \label{eq2}
X^2 = 0 \text{ on } I.
\end{equation}
Take $a \in h(L)$, and define $u := [x,[x,a]]$. Then
\begin{equation*}
U = [X,[X,A]] = X^2A-2XAX+AX^2,
\end{equation*}
hence,
\begin{align*}
U^2 &= (X^2A-2XAX+AX^2)(X^2A-2XAX+AX^2) \\
&= X^2AU-2XAX^3A+4XAX^2AX-2XAXAX^2+AX^2U.
\end{align*}
Using \eqref{eq2}, $\rm{Im}(U) \subseteq I$ and that $A$ maps $I$ into $I$ we have
\begin{equation} \label{eq3}
U^2=-2XAXAX^2.
\end{equation}
Now we return to \eqref{eq1} to obtain $X^2Y-2XYX+YX^2 = 0$ for all $Y = \Ad y$ with $y \in h(I)$. Apply again \eqref{eq2} and $\rm{Im}(Y) \subseteq I$ to have
\begin{equation*}
YX^2=2XYX \text{ for all } Y = \Ad y \text{ with } y \in h(I). 
\end{equation*}
Substitute in the last equation $Y$ by $[Y,A]$, which is in $\ad (I)$. Then $(YA-AY)X^2 = 2X(YA-AY)X$, hence $YAX^2 - AYX^2 = 2XYA-2XAYX$. Now, change $Y$ to $X$: $XAX^2 - AX^3 = 2X^2A-2XAX^2$, that is, $XAX^2 = 2X^2A+AX^3$. By \eqref{eq2}
\begin{equation*}
XAX^2=0 \text{ on } I.
\end{equation*}
Hence, by \eqref{eq3}, $U^2=0$ on $I$.
\end{proof}

As a consequence of Proposition \ref{homcuad} we obtain the following result, whose analogous in the non-graded case is \cite[Lemma 10]{Z84}.

\begin{cor}\label{gradid} Every graded ideal of a graded strongly nondegenerate Lie algebra is graded strongly nondegenerate (as a Lie algebra).
\end{cor}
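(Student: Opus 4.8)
The plan is to deduce Corollary~\ref{gradid} from Proposition~\ref{homcuad} by a maximality argument on absolute zero divisors, mimicking the classical proof of \cite[Lemma 10]{Z84}. Let $L$ be a graded strongly nondegenerate Lie algebra and $I$ a graded ideal; I want to show $\QAn_I(I)=0$. The first issue is that Proposition~\ref{homcuad} requires $Z(L)=0$, which is automatic here: a graded strongly nondegenerate algebra has no nonzero absolute zero divisors, and any central element $z$ satisfies $(\Ad z)^2=0$, so $Z(L)=0$. So Proposition~\ref{homcuad} applies: for $x\in h(\QAn_I(I))$ and $a\in h(L)$ we have $[x,[x,a]]\in h(\QAn_I(I))$ as well.

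Next I would reformulate what it means for $I$ to be graded strongly nondegenerate: since $I$ is itself a graded Lie algebra, by Proposition~\ref{grequiv}-type reasoning (or directly from the definition) it suffices to show it has no nonzero \emph{homogeneous} absolute zero divisors, i.e.\ $h(\QAn_I(I))=\{0\}$, because an arbitrary element of $\QAn_I(I)$ need not be homogeneous. Here I'd want to be slightly careful: $\QAn_I(I)$ as a subset of $I$ need not be graded, but $h(\QAn_I(I))$ certainly makes sense, and showing every homogeneous element of $\QAn_I(I)$ is zero is exactly saying $I$ has no nonzero homogeneous absolute zero divisors, which is the definition of graded strong nondegeneracy. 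So the goal reduces to: $x\in h(\QAn_I(I))\implies x=0$.

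Now fix such an $x$, say of degree $\sigma$, and consider the submodule $M$ of $L$ spanned by all iterated brackets $[x,[x,[\,\cdots\,]]]$ — more precisely, I'd let $M=\Phi x+[x,[x,L]]+\cdots$, or better, work with the ideal of $L$ generated by $x$. The key point from Proposition~\ref{homcuad} is that the set $S=h(\QAn_I(I))$ is stable under the operation $a\mapsto[x,[x,a]]$ for $a\in h(L)$. I would then show that $x$ generates a nilpotent-type structure: for homogeneous $a_1,a_2,\dots$, repeated application keeps us inside $S$, and one shows $\Ad x$ is nilpotent on the relevant subalgebra, producing a nonzero absolute zero divisor of $L$ itself — contradicting strong nondegeneracy of $L$. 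Concretely: since $(\Ad x)^2=0$ on $I$ and $[x,[x,L]]\subseteq I$ (as $[x,L]\subseteq I$, $I$ being an ideal), for any $a\in h(L)$ we get $[x,[x,[x,[x,a]]]]=0$, i.e.\ $(\Ad x)^4=0$ on all of $h(L)$, hence on $L$. The hard part is squeezing this down from $(\Ad x)^4=0$ to $(\Ad x)^2=0$; for this I'd invoke the same mechanism used in \cite{Z84}: if $(\Ad x)^4=0$ on $L$ then $(\Ad x)^2 x' $ with $x'=(\Ad x)^2 a$ is again homogeneous and in $\QAn_I(I)$ by Proposition~\ref{homcuad}, and iterating together with strong nondegeneracy of $L$ forces $(\Ad x)^2=0$ on $L$, so $x\in\QAn_L(L)=0$.

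I expect the main obstacle to be exactly this last reduction — turning "fourth power of $\ad x$ vanishes" into "$x$ is an absolute zero divisor of $L$" — which is the crux of Zelmanov's argument and uses the graded analogue of his lemma together with the fact that $h(\QAn_I(I))$ is closed under $a\mapsto[x,[x,a]]$. Everything else (checking $Z(L)=0$, that $[x,L]\subseteq I$, that it suffices to kill homogeneous absolute zero divisors, and the routine bracket manipulations) is straightforward. So the proof is essentially: reduce to homogeneous $x\in\QAn_I(I)$, apply Proposition~\ref{homcuad} to see the orbit of $x$ under double-bracketing stays in $h(\QAn_I(I))$, combine with $(\Ad x)^2=0$ on $I\supseteq[x,L]$ to get local nilpotence of $\ad x$, and finish by the graded Zelmanov reduction to conclude $x\in\QAn_L(L)=0$.
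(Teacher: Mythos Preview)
Your overall plan coincides with the paper's: Corollary~\ref{gradid} is stated there without proof as a consequence of Proposition~\ref{homcuad}, with \cite[Lemma~10]{Z84} cited for the ungraded prototype. So the scheme ``reduce to a homogeneous $x\in\QAn_I(I)$, note $Z(L)=0$, invoke Proposition~\ref{homcuad}, and run a Zelmanov-style argument to force $x\in\QAn_L(L)=0$'' is exactly right. Two points need correction, though.

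First, a sharpening: since $x\in I$ one already has $[x,L]\subseteq I$, so $(\ad x)^2\vert_I=0$ gives $(\ad x)^3=0$ on $L$, not merely $(\ad x)^4=0$.

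Second --- and this is where the genuine gap lies --- your sketched reduction collapses. You propose to look at $(\ad x)^2 x'$ with $x'=(\ad x)^2 a$, but that is $(\ad x)^4 a$, which is already zero by the previous line; so the ``iteration'' produces nothing and never gets off the ground. The actual mechanism is different: one shows directly that each $u_a:=[x,[x,a]]$ is a homogeneous absolute zero divisor of $L$ (not only of $I$), so that $u_a=0$ by graded strong nondegeneracy of $L$ and hence $x\in\QAn_L(L)=0$. One way to see this is to push the operator identities from the proof of Proposition~\ref{homcuad} a bit further: on all of $L$ one has $U_a^{\,2}=-2\,XAXAX^2$, and taking $y=[x,a]\in I$ in the relation $YX^2=2XYX$ (together with $X^3=0$ on $L$) gives $3\,XAX^2=2\,X^2AX$ on $L$. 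Applying the latter twice turns $9\,XAXAX^2$ into $4\,X^2AXAX$, which vanishes on $L$ since $AXAX(L)\subseteq I$ and $X^2\vert_I=0$. Hence (under the $3$-torsion freeness of $\Phi$ used tacitly elsewhere in the paper) $U_a^{\,2}=0$ on $L$, and the argument closes.
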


\section{The Martindale symmetric ring of quotients of a graded algebra}

Various constructions of algebras of quotients of associative
algebras are known. If $A$ is any semiprime algebra, we denote by $Q^l_{max}(A)$ the maximal left quotient algebra of $A$ and by $Q_s(A)$ the Martindale symmetric algebra of quotients of $A$. Recall that $Q_s(A)$ can be characterized as those elements $q \in Q^l_{max}(A)$ for which there is an essential ideal $I$ of $A$ satisfying that $Iq+qI\subseteq A$. We refer the reader to \cite{BMM} and to \cite{lam} for an account  on these concepts.

The aim of this section is to show that the zero component of the Martindale symmetric algebra of quotients of a finite $\Z$-graded associative algebra $A$ is the Martindale symmetric algebra of quotients of the zero component $A_0$.
To this end we will see that the grading comes from a family of orthogonal idempotents.

Recall that an associative algebra $A$ is \emph{left} (\emph{right}) \emph{faithful} if $xA = 0$ $(Ax=0)$ implies that $x=0$.

A set of submodules $\{A_{ij} \ \vert \ 0 \leq i,j \leq n \}$ of an algebra $A$ is said to be a \emph{Peirce system} if $A = \sum_{i,j=0}^n A_{ij}$ and $A_{ij}A_{kl} \subseteq A_{il}$ if $j=k$ and $A_{ij}A_{kl} = 0$ if $j \neq k$. 

Now, we will show that finite $\Z$-gradings of an associative algebra $A$ are determined by idempotents $e \in Q_s(A)$.

\begin{vacio} \label{Peircesystem}
{\rm Given a graded associative algebra $A=\oplus^n_{k=-n} A_k$, we define: \begin{align*} H_i & :=A_iA_{-n}A_{n-i}  \, \hbox{ for } \, i\in \{0,\ldots, n\} \\ H_{ij}& :=H_iAH_j \, \hbox{ for } \, i,\, j\in \{0,\ldots, n\} \end{align*} It was proved in \cite[Proposition 2.3 (i)]{msm2} that the set $\{H_{ij} \ \vert \  i,\, j= 0,\ldots, n\}$ is a Peirce system and $A = \oplus H_{i,j=0}^n$. We say that the set $\{H_{ij} \ \vert \  i,\, j= 0,\ldots, n\}$ is a \emph{strict Peirce system}}.
\end{vacio}

Let $S$ be a unital algebra. A family $\{e_1, \dots, e_n\}$ of
orthogonal idempotents in $S$ is said to be {\it complete} if
$\sum_{i=1}^ne_i=1\in S$. Suppose $A=\bigoplus_{k=-n}^nA_k$ to be a
graded algebra, subalgebra of $S$.

\begin{definitions}{\rm  We will say that the
$\Z$-grading of $A$ is {\it induced by} the complete system $\{e_1,
\dots, e_n\}$ of orthogonal idempotents of $S$ if
$H_{ij}=e_iAe_j$, (see \ref{Peircesystem} for the definition of the
$H_{ij}$'s).

In particular, for $n=1$ the grading is induced by a complete
orthogonal system of idempotents $\{e, 1-e\}$ if
$$\left\lbrace\begin{matrix}
A_{-1} & = &(1-e)Ae \cr A_0 &= &eAe\oplus
(1-e)A(1-e) \cr A_1 & =&eA(1-e). \end{matrix}\right. $$ 

In this case we will simply say that the 3-grading is {\it induced by the
idempotent} $e$.
}
\end{definitions}

In the following proposition we can see that the grading of the algebra $A$ is induced by a set of idempotents of $Q_s(A)$.

\begin{prop}\label{idempotentesQs}
Let $A = \oplus_{k=-n}^n A_k$ be a graded right faithful algebra such that $A = id(A_{-n})$, the ideal of $A$ generated by $A_{-n}$, and $A=A_0AA_0$. Then there exists a complete system of orthogonal idempotents $\{e_0, \ldots, e_n\} \subseteq Q_s(A)$ inducing the grading of $A$ and hence inducing a grading on $Q_s(A)$ in such a way that $A_k \subseteq Q_s(A)_k$ for $k = 0, \ldots, n$.
\end{prop}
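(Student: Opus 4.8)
The plan is to start from the strict Peirce system $\{H_{ij}\}$ of \ref{Peircesystem} and manufacture the idempotents inside $Q_s(A)$ by a ``diagonal'' argument. First I would observe that, under the hypotheses $A=\mathrm{id}(A_{-n})$ and $A=A_0AA_0$, each $H_i=A_iA_{-n}A_{n-i}$ is nonzero and, more importantly, that $\sum_i H_i$ (equivalently $\sum_{ij}H_{ij}=A$) exhausts $A$; this is what makes it reasonable to expect a \emph{complete} system. The natural candidate for $e_i$ is the element of $Q_s(A)$ acting as the identity on the ideal generated by $H_{ii}$ and as zero on the $H_{jj}$ with $j\neq i$: concretely, one defines a left (and right) multiplier on the essential ideal $I=\bigoplus_{k,l} H_{kl}H_{lk}$ (or better $I=\mathrm{id}(\sum_k H_{kk})$, which one must check is essential using right faithfulness and semiprimeness) by the rule that on a summand $H_{kl}$ it multiplies by $\delta_{ik}$ on the left and by $\delta_{il}$ on the right. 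One then checks this multiplier pair is \emph{balanced} (the left and right actions agree on the middle), so it defines an element $e_i\in Q_s(A)$; the Peirce multiplication rules $H_{ij}H_{kl}\subseteq\delta_{jk}H_{il}$ make the verification of $e_ie_j=\delta_{ij}e_i$ and of balancedness essentially bookkeeping.

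Next I would establish $\sum_i e_i=1$. Since $Q_s(A)$ is unital and $A$ is an essential ideal of it, it suffices to show $(\sum_i e_i)a=a=a(\sum_i e_i)$ for all $a\in A$, and by $A=\bigoplus H_{kl}$ it suffices to check this on each $H_{kl}$, where $\sum_i e_i$ acts on the left as $\sum_i\delta_{ik}=1$ and on the right as $\sum_i\delta_{il}=1$. Then the key identity $H_{ij}=e_iAe_j$ follows: ``$\subseteq$'' is immediate from $e_i h = h = h e_j$ for $h\in H_{ij}$, and ``$\supseteq$'' follows by writing an arbitrary $a\in A$ as $\sum_{k,l}a_{kl}$ with $a_{kl}\in H_{kl}$ and noting $e_iae_j=a_{ij}\in H_{ij}$ (using $e_ia_{kl}=\delta_{ik}a_{kl}$, $a_{kl}e_j=\delta_{lj}a_{kl}$). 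This is exactly the definition of the grading of $A$ being induced by $\{e_0,\dots,e_n\}$.

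Finally, the induced grading on $Q_s(A)$ is defined by $Q_s(A)_k=\sum_{i-j=k} e_i Q_s(A) e_j$ (with the convention matching $\ref{Peircesystem}$, so that $A_k=\bigoplus_{i} H_{i,i-k}$ maps into $Q_s(A)_k$). Since $A_k=\bigoplus_{i-j=k}H_{ij}=\bigoplus_{i-j=k}e_iAe_j\subseteq\bigoplus_{i-j=k}e_iQ_s(A)e_j=Q_s(A)_k$, the last assertion $A_k\subseteq Q_s(A)_k$ is immediate. One should also remark that $\bigoplus_k Q_s(A)_k=Q_s(A)$ because $\{e_i\}$ is complete: $q=(\sum_i e_i)q(\sum_j e_j)=\sum_{i,j}e_iqe_j$.

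The main obstacle I anticipate is not the idempotent arithmetic but verifying that the ideal on which the multipliers are defined is genuinely \emph{essential} in $A$ (so that the multiplier pairs really determine elements of $Q_s(A)$, and that these elements are unique), and that the multipliers are \emph{balanced}. The essentiality step is where the hypotheses $A$ right faithful, $A=\mathrm{id}(A_{-n})$, $A=A_0AA_0$, and semiprimeness of $A$ all have to be used together — presumably via \cite[Proposition 2.3]{msm2} or a direct computation showing $\mathrm{Ann}_A$ of the relevant ideal is zero; the balancedness relies on a careful tracking of which Peirce components absorb which, and on the fact that $A$ (hence the test ideal) is semiprime so that an element annihilating $I$ on one side annihilates it on both.
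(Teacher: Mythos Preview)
Your approach is correct and close in spirit to the paper's, but the paper takes a shortcut you may not have noticed: it invokes \cite[Theorem~2.8]{msm2}, which already produces the complete orthogonal system $\{e_0,\dots,e_n\}$ inside $Q^l_{\max}(A)$ as the classes $e_k=[A,f_k]$ of the column projections $f_k=\sum_i\pi_{ik}\colon A\to\bigoplus_i H_{ik}$. The only new work is then to check that each $e_k$ actually lies in $Q_s(A)$, and this is done by a one-line computation showing $Ae_k+e_kA\subseteq A$ (so the essential ideal in the characterization of $Q_s(A)$ can be taken to be $A$ itself). Your direct construction via balanced multiplier pairs is essentially the same computation unpacked, and it works; the balancedness check you anticipate is immediate from the Peirce rule $H_{ij}H_{kl}=0$ for $j\neq k$.

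The one place where you make life harder than necessary is your ``main obstacle.'' Because \ref{Peircesystem} already records that $A=\bigoplus_{i,j}H_{ij}$ as a direct sum, the multipliers are defined on \emph{all} of $A$, not merely on a sub-ideal whose essentiality must be verified. So the essentiality argument you flag (using right faithfulness, $A=\mathrm{id}(A_{-n})$, $A=A_0AA_0$, and semiprimeness) is not needed at this stage: those hypotheses are consumed earlier, in establishing that $\{H_{ij}\}$ is a strict Peirce system decomposing $A$. Once you use $A=\bigoplus H_{ij}$, your left/right multipliers for $e_k$ are globally defined, compatible, and satisfy $Ae_k+e_kA\subseteq A$, which is exactly the paper's computation.
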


\begin{proof}
By \cite[Theorem 2.8]{msm2} there exists a complete system of orthogonal idempotents $\{e_0, \ldots, e_n\} \subseteq Q_{max}^l(A)$ such that the grading of $A$ is induced by this set. We claim that $e_i \in Q_s(A)$ for $i = 0, \ldots, n.$ Let us first recall the construction of that set; denote by $\pi_{ij} \colon A \to H_{ij}$ the projection onto $H_{ij}$, where $\{H_{ij} \ \vert \  i,j = 0, \ldots, n\}$ is the strict Peirce system defined in \ref{Peircesystem}. For $k \in \{0, \ldots, n\}$, consider
\begin{equation*}
f_k = \sum_{i=0}^n \pi_{ik} \colon A \to A_k = \sum_{i=0}^n H_{ik}.
\end{equation*}
It was shown in \cite[Theorem 2.8]{msm2} that $f_k \in \End_A(A)$ and for $e_k := [A,f_k] \in Q_{max}^l(A),$ the set $\{e_0, \ldots, e_n\}$ is a complete system of orthogonal idempotents. In order to reach our claim we use the characterization of $Q_s(A)$ in terms of the elements of $Q_{max}^l(A)$ by identifying any $x \in A$ with the class $[A,\rho_x],$ where $\rho_x$ maps $a \in A$ to $ax \in A$. We claim that $Ae_k + e_kA \subseteq A$ for $k \in \{0, \ldots, n\}.$ In fact, given $x = \sum_{i,j=0}^n x_{ij}, y = \sum_{i,j=0}^n y_{ij}$ and $a=\sum_{i,j=0}^n a_{ij},$ we have
\begin{align*}
(\rho_xe_k+e_k\rho_y)(a) &= \rho_x\left(\sum_{i=0}^n a_{ik}\right) + e_k(ay) = \left(\sum_{i=0}^n a_{ik}\right)x+a\left(\sum_{i=0}^n y_{ik}\right) \\
&= \sum_{i,j=0}^n a_{ik}x_{kj} + \sum_{i,j=0}^n a_{ji}y_{ik} = a\left(\sum_{j=0}^n x_{kj} + \sum_{i=0}^n y_{ik}\right) = \rho_{z_k}(a)
\end{align*}
for $z_k = \sum_{j=0}^n x_{kj} + \sum_{i=0}^n y_{ik},$ which concludes the proof.
\end{proof}

Let $X$ be a subset of an associative algebra $A$. The \emph{left} and \emph{right annihilator} of $X$ in $A$ are defined as follows:

$$\lan_A(X) = \{a \in A \ \vert \  ax = 0 \text{ for all } x \in X \},$$ 

$$\ran_A(X) = \{a \in A \ \vert \  xa = 0 \text{ for all } x \in X \},$$

while the \emph{annihilator} of $X$ in $A$ is defined as $$\mathrm{ann}_A(X) = \lan_A(X) \cap \ran_A(X).$$

The notion of local algebra at an element that we are going to recall generalizes that of corner in an algebra and has shown to be very useful when the algebra has not a unit element.

If $a \in A$, then the \emph{local algebra} of $A$ at $a$, denoted by $A_a$, is defined as the algebra obtained from the $\Phi$-module $(A_a,+)$ by considering the product given by $axa \cdot aya = axaya$. In particular, when $e = e^2$ is an idempotent of $A$, the local algebra of $A$ at $e$ is just $eAe$.

Given essential ideals $I$ and $J$ of $A$ we will say that a left $A$-homomorphism $f \colon I \to A$ and a right $A$-homomorphism $g \colon J \to A$ are \emph{compatible} if for every $x \in I$ and $y \in J$, $(x)fy = xg(y)$.

Note that in order to distinguish between left and right $A$-homomorphisms we have used the notation $( \ )f$ for a left $A$-homomorphism and $g( \ )$ for a right $A$-homomorphism.

The following two results extend Lemma 3 and Theorem 3 of \cite{gs4}, respectively. The proofs are parallel to those in the cited paper.

\begin{lem}\label{lema32agsadaptado} 
Let $A$ be a semiprime associative algebra, and let $x$ be a nonzero element in $Q_s(A)$ such that $xA + Ax \subseteq A$. Then for every essential ideal $xIx$ of $A_x$, we have:

\rm{(i)} $AxIxA \oplus \mathrm{ann}_A (AxIxA)$ is an essential ideal of $A$.

\rm{(ii)} $\mathrm{ann}_A (AxIxA) \subseteq \lan_A(x) \cap \ran_A(x).$
\end{lem}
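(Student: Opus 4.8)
The plan is to mimic the strategy of \cite[Lemma 3]{gs4}, adapted from the corner setting $e A e$ to the local algebra $A_x$. Write $M := AxIxA$, a nonzero ideal of $A$ (nonzero because $A$ is semiprime and $xIx \neq 0$, since $xIx$ is essential in the semiprime algebra $A_x$, which is nonzero as $x \neq 0$). Since $A$ is semiprime, $M \cap \mathrm{ann}_A(M) = 0$, so the sum $M \oplus \mathrm{ann}_A(M)$ is indeed direct. For (i), I would show this ideal is essential by verifying that its annihilator in $A$ is zero: if $a \in \mathrm{ann}_A(M \oplus \mathrm{ann}_A(M))$, then in particular $a$ annihilates $M$, so $a \in \mathrm{ann}_A(M)$; but $a$ also annihilates $\mathrm{ann}_A(M)$, hence $a \in \mathrm{ann}_A(M) \cap \mathrm{ann}_A(\mathrm{ann}_A(M))$, and in a semiprime algebra an ideal contained in the annihilator of its own annihilator, intersected with that annihilator, forces $a^2 = 0$ and then $a = 0$. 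So $\mathrm{ann}_A(M \oplus \mathrm{ann}_A(M)) = 0$, which in a semiprime algebra is equivalent to being essential.

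For (ii), the key point is to relate $\mathrm{ann}_A(M)$ to the annihilators of $x$. Let $a \in \mathrm{ann}_A(M) = \mathrm{ann}_A(AxIxA)$. I want to conclude $ax = 0$ and $xa = 0$. First note that $xAx \subseteq A_x$ and that the essential ideal $xIx$ of $A_x$ can be enlarged: since $A$ is semiprime and $xA + Ax \subseteq A$, the ideal $xIx$ of $A_x$ contains, or is contained in, a suitable power, and in particular $x(xIx)x$-type products sit inside $A$. The mechanism is: from $a \cdot AxIxA = 0$ and $AxIxA \cdot a = 0$ we get $(xa)\,AxIxA = 0$ etc.; more to the point, $x I x \subseteq A_x$ is essential, so $x A x \cdot x I x \cdot x A x$ is a ``large'' portion of $A_x$, and pulling back through the definition $axa\cdot aya = axaya$ one sees that $x(Ax I xA)x$ is essential-ish inside $A_x$. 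Then $a$ annihilating $AxIxA$ on both sides, together with $xAx$ landing in $A$, yields $xax = 0$; from $xax = 0$ for the relevant $a$ one bootstraps via semiprimeness of $A_x$ (so $\mathrm{ann}_{A_x}$ of an essential ideal is $0$) to $ax = 0$ and $xa = 0$, i.e. $a \in \lan_A(x) \cap \ran_A(x)$.

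The main obstacle I expect is (ii): carefully tracking how an element annihilating the two-sided ideal $AxIxA$ in $A$ is forced to annihilate $x$ itself, which requires using that $xIx$ is \emph{essential} in $A_x$ (not merely nonzero) and the semiprimeness of $A_x$ — and $A_x$ is semiprime precisely because $A$ is and $xA + Ax \subseteq A$, a fact one should invoke or verify in passing. The direct-sum and essentiality bookkeeping in (i) is routine once semiprimeness of $A$ is used to get $M \cap \mathrm{ann}_A(M) = 0$ and $\mathrm{ann}_A(\mathrm{ann}_A(M)) \supseteq M$ with $M$ essential as an ideal.
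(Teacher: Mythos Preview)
Your treatment of (i) is correct and matches the paper exactly: in a semiprime algebra $J \cap \ann_A(J) = 0$ for any ideal $J$, and $J \oplus \ann_A(J)$ has zero annihilator, hence is essential.

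For (ii), however, your outline has a genuine gap. You propose to show $xax = 0$ first and then ``bootstrap via semiprimeness of $A_x$'' to $ax = 0$; but $ax \notin A_x$, so semiprimeness of $A_x$ says nothing about $ax$. (A correct bootstrap would use semiprimeness of $A$ together with the fact that $\ann_A(AxIxA)$ is an ideal: if $xcx = 0$ for every $c$ in that ideal, then $axbax = a\cdot x(ba)x = 0$ for all $b\in A$, so $(ax)A(ax)=0$ and $ax = 0$ --- but that is not what you wrote.) Your derivation of $xax = 0$ is also never made precise; ``essential-ish'' and ``a suitable power'' are not arguments, and without a unit one does not have $xIx \subseteq AxIxA$, so $a\cdot xIx = 0$ is not immediate. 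The paper bypasses the intermediate $xax = 0$ altogether and argues by direct contradiction: assuming $ax \neq 0$, semiprimeness of $A$ together with $xA+Ax \subseteq A$ produces $u,v \in A$ with $xuaxvx \neq 0$ in $A_x$; since $xIx$ is essential in the semiprime algebra $A_x$, one right-multiplies twice in $A_x$ by elements $xyx, xzx \in xIx$ and stays nonzero, obtaining $xuaxvxyxzx \neq 0$. But $(xv)(xyx)(zx) \in A\cdot xIx\cdot A = AxIxA$, so this element lies in $xu\,(a\cdot AxIxA) = 0$, a contradiction. The key idea you are missing is this direct passage from $ax \neq 0$ to a nonzero element of $A_x$ that, after two multiplications by $xIx$, visibly factors through $a\cdot AxIxA$.
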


\begin{proof}
(i) For every ideal $J$ in the semiprime algebra $A$, $J \cap \mathrm{ann} J = 0$ and therefore $J \oplus \mathrm{ann} J$ is an essential ideal of $A$.

(ii) Take $a \in \mathrm{ann}_A (AxIxA)$. Suppose that $a \notin \lan_A(x)$, then $ax \neq 0$. Since $xA + Ax \subseteq A$ and by semiprimeness of $A$ there exist $u, v \in A$ such that $0 \neq xuaxvx.$ Apply twice that $xIx$ is an essential ideal of $A_x$ to find $xyx, xzx \in xIx$ such that $0 \neq xuaxvx \cdot xyx \cdot xzx = xuaxvxyxzx \in xuaAxIxA =0$, a contradiction. Analogously we can prove that $a \in \ran_A(x).$
\end{proof}

Recall that an element $x$ in an associative algebra $A$ is \emph{von Neumann regular} if $x = xyx$ for some $y \in A$.

\begin{thm} \label{isomorfismolocales}
Let $A$ be a semiprime associative algebra and let $x \in Q_s(A)$ such that $xA+Ax \subseteq A$. Then $x$ is von Neumann regular if and only if $Q_s(A_x) \cong Q_s(A)_x$ under an isomorphism which is the identity on $A_x$.

\end{thm}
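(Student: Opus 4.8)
The plan is to prove both implications separately, with the forward direction being the substantial one.

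\medskip

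First, suppose $x$ is von Neumann regular, say $x = xyx$ for some $y \in Q_s(A)$ — actually I would first argue one may take $y$ with $yx, xy \in A$ (or better, replace $y$ by $yxy$ and use $xA + Ax \subseteq A$ to locate a suitable idempotent). Set $e := xy$ and $f := yx$; these are idempotents and I want to realize $A_x$ essentially as a corner. The key structural observation is that the map $a \mapsto xax$ (or an appropriate normalization) identifies $A_x$ with a subalgebra, and the condition $xA + Ax \subseteq A$ guarantees $xAx \subseteq A$. I would then construct the isomorphism $\varphi \colon Q_s(A)_x \to Q_s(A_x)$ as follows: given $q \in Q_s(A)$ with $xA + Ax \subseteq A$ as in the hypothesis and $x q x \in (Q_s(A))_x$, I need an essential ideal of $A_x$ on which $xqx$ acts appropriately. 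Here Lemma \ref{lema32agsadaptado} is the crucial tool: for an essential ideal $xIx$ of $A_x$, the ideal $AxIxA$ together with $\mathrm{ann}_A(AxIxA)$ is essential in $A$, and the annihilator part kills $x$ on both sides. This lets me transfer between essential ideals of $A$ and essential ideals of $A_x$: starting from an essential ideal $K$ of $A$ with $Kq + qK \subseteq A$, the ideal $xKx$ (suitably interpreted inside $A_x$) is essential in $A_x$, and conversely.

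\medskip

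Concretely, I would define $\varphi(xqx) = $ the element of $Q_s(A_x)$ represented by the pair of multiplication maps $xax \mapsto x(aq)x$ and $xax \mapsto x(qa)x$ on an essential ideal of the form $xKx$ where $K$ is essential in $A$, $Kq + qK \subseteq A$, and also $K \subseteq xAx$-related (one passes to $xKx$). One checks these are left/right $A_x$-module maps and are compatible, hence define an element of $Q_s(A_x)$; independence of the choice of $K$ follows from semiprimeness. Injectivity: if $\varphi(xqx) = 0$ then $xKxqxKx = 0$ for an essential ideal, and using von Neumann regularity ($x = xyx$) together with Lemma \ref{lema32agsadaptado}(ii) one pulls this back to $qJ = 0$ for an essential ideal $J$ of $A$, forcing $q x = 0$ hence $xqx = 0$ in $Q_s(A)_x$. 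Surjectivity is where regularity is used essentially: given $p \in Q_s(A_x)$ with essential ideal $xIx$ of $A_x$ such that $xIx \cdot p + p \cdot xIx \subseteq A_x$, I use $AxIxA \oplus \mathrm{ann}_A(AxIxA)$ as an essential ideal of $A$ (Lemma \ref{lema32agsadaptado}(i)) to build, via $y$ with $x = xyx$, a left/right multiplier on this essential ideal extending $p$; the annihilator summand is handled by letting the multiplier act as $0$ there, which is consistent by Lemma \ref{lema32agsadaptado}(ii). Finally, $\varphi$ is the identity on $A_x$ by construction, and it is an algebra homomorphism by a direct check on representatives.

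\medskip

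For the converse, suppose $Q_s(A_x) \cong Q_s(A)_x$ via an isomorphism that is the identity on $A_x$. Since $Q_s(A_x)$ is unital, its identity element $1$ corresponds under the isomorphism to an element of $Q_s(A)_x$, i.e., to $xex$ for some $e \in Q_s(A)$ (using the description of $Q_s(A)_x$ as having underlying module $x \cdot Q_s(A) \cdot x$ or similar — here I would be careful about the precise model of the local algebra at $x$ inside $Q_s(A)$). The unit property $xex \cdot xax = xax$ for all $a$ translates, after unraveling the local-algebra product $xux \cdot xvx = xuvx$, into $x(exa)x = xax$ on a dense set, and semiprimeness of $A$ (hence of $Q_s(A)$) upgrades this to $xex \cdot x = x$ inside $Q_s(A)$, i.e., $x$ is von Neumann regular in $Q_s(A)$ with quasi-inverse $ex$. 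A small extra argument — shrinking $e$ to $exe$ and using $xA + Ax \subseteq A$ — produces a von Neumann inverse lying appropriately, and in any case regularity of $x$ as an element of $A$ follows because $x \in Q_s(A)$ and the relation $x = x(ex)x$ can be brought into $A$ by multiplying by an essential ideal on which $ex$ is represented, though one should note the theorem statement only needs $x$ von Neumann regular (as an element of $Q_s(A)$, equivalently of $A$ after the standard reduction).

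\medskip

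The main obstacle I anticipate is the surjectivity of $\varphi$ in the forward direction: extending a quotient of the local algebra to a genuine multiplier on an essential ideal of $A$ requires the idempotent-like behavior that only von Neumann regularity supplies, and one must carefully juggle the decomposition $AxIxA \oplus \mathrm{ann}_A(AxIxA)$ so that the constructed multiplier is well-defined, $A$-linear on both sides, and compatible — this is exactly the point where the parallel with \cite[Theorem 3]{gs4} is invoked, and where the bookkeeping with $x = xyx$ is most delicate.
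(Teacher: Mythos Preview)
Your proposal is correct and follows essentially the same route as the paper. The only cosmetic difference is that the paper, rather than building an explicit map $\varphi\colon Q_s(A)_x\to Q_s(A_x)$, verifies directly that $Q_s(A)_x$ satisfies the three characterizing conditions of the Martindale symmetric ring of quotients of $A_x$ (as in \cite[(14.25)]{lam}); your injectivity/surjectivity arguments are exactly the content of conditions~(2) and~(3) there, and both proofs hinge on Lemma~\ref{lema32agsadaptado} in the same way (the decomposition $AxIxA\oplus\ann_A(AxIxA)$ is used to extend a compatible pair on $xIx$, $xJx$ to one on essential ideals of $A$, and von Neumann regularity $x=xux$ is needed to pin the resulting $q\in Q_s(A)$ down to $q=xuqux\in xQ_s(A)x$). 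For the converse the paper is terser than you: it simply notes that if $Q_s(A)_x\cong Q_s(A_x)$ then $Q_s(A)_x$ is unital with unit $x$, whence $x\in xQ_s(A)x$ and $x$ is von Neumann regular --- your semiprimeness argument showing $xex=x$ is the unpacking of that one line.
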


\begin{proof}
Note that $A_x \subseteq Q_s(A)_x$ since $A \subseteq Q_s(A)$. Now, for $Q_s(A)_x$, we will prove the three conditions in \cite[(14.25)]{lam}.

(1). Let $xqx$ be an element of $Q_s(A)_x$, where $q \in Q_s(A)$. Let $I$ and $J$ be essential ideals of $A$ satisfying $qxI, Jxq \subseteq A.$ Then $xqxIx, xJxqx \subseteq A_x$, where $xIx$ and $xJx$ are essential ideals of $A_x$: this can be proved as in \cite[Lemma 2]{gs4}.

(2). Take $xqx \in Q_s(A)_x$ and let $xIx, xJx$ be essential ideals of $A_x$. Suppose that $xJxqx =0$. Recall that $Q_s(A)$ is a left quotient algebra of $A$, so, it can be proved as in by \cite[Proposition 3.2 (v)]{gsFG} that $Q_s(A)_x$ is a left quotient ring of $A_x$. Since $xJx$ is a dense left ideal of $A_x$, $xqx = 0$. Analogously we can prove that $xqx = 0$ if and only if $xqxIx = 0$.

(3). Let $xIx$ and $xJx$ be essential ideals of $A_x$, and let $f \in \Hom_{A_x}(_{A_x}xIx , \ _{A_x}A_x)$ and $g \in \Hom_{A_x}(xJx_{A_x},(A_x)_{A_x})$ be compatible. By Lemma \ref{lema32agsadaptado}, $AxIxA \oplus \mathrm{ann}_A(AxIxA)$ is an essential ideal of $A$. Define

\begin{align*}
\overline{f} \colon & AxIxA \oplus \mathrm{ann}_A(AxIxA) \to A \\
& \sum a_ixy_ixb_i + t \mapsto \sum a_i(xy_ixb_ix)f
\end{align*}

Let us show that $\overline{f}$ is well defined. If $\sum a_i(xy_ixb_ix)f \neq 0$ then, by semiprimeness of $A$ there is an element $t \in A$ such that:
\begin{align*}
0 & \neq xt\sum a_i(xy_ixb_ix)f = \sum xta_i(xy_ixb_ix)f = \sum xta_ix \cdot (xy_ixb_ix)f \\
& = \left(\sum xta_ixy_ixb_ix\right)f.
\end{align*}
It follows that $\sum a_ixy_ixb_i \neq 0.$ Moreover, $\overline{f}$ is easily seen to be a homomorphism of left $A$-modules. Analogously we have that the map
\begin{align*}
\overline{g} \colon & AxJxA \oplus \ann_A(AxJxA) \to A \\
& \sum a_ixy_ixb_i + t \mapsto \sum g(xa_ixz_ix)b_i
\end{align*}
is well defined and a homomorphism of right $A$-modules. The homomorphisms $\overline{f}$ and $\overline{g}$ are compatible. Since $x$ is von Neumann regular we can choose $u \in Q_s(A)$ such that $x = xux$ and $u = uxu$. For every $axyxb + t \in AxIxA \oplus \ann (AxIxA)$ and every $a'xy'xb' + t' \in AxJxA \oplus \ann (AxJxA)$ we have
\begin{align}\label{Qsec1}
(axyxb+t)\overline{f}(a'xy'xb'+t') & = a(xyxbx)f(a'xy'xb'+t') \nonumber \\ &= a(xyxbx)fux(a'xy'xb'+t').
\end{align}
From Lemma \ref{lema32agsadaptado} (ii) we obtain
\begin{align} 
 a(xyxbx)fux(a'xy'xb'+t') &= a(xyxbx)f(uxa'xy'xb') \label{Qsec2} 
\end{align}
Since $f$ and $g$ are compatible, $a(xyxbx)f(uxa'xy'xb') = axyxbxug(xa'xy'x)b'$. The last equation together with \eqref{Qsec1} and \eqref{Qsec2} implies that $(axyxb+t)\overline{f}(a'xy'xb'+t') = axyxbxug(xa'xy'x)b'$. Now, apply again Lemma \ref{lema32agsadaptado} to get
\begin{align*}
(axyxb+t)\overline{f}(a'xy'xb'+t') & = axyxbxug(xa'xy'x)b' = (axyxbxu+t)g(xa'xy'x)b' \\ &= (axyxbxu+t)\overline{g}(xa'xy'xb'+t').
\end{align*}
By Lemma \ref{lema32agsadaptado} (i), $AxIxA \oplus \ann (AxIxA)$ and $AxJxA \oplus \ann (AxJxA)$ are essential ideals of $A$. Then, the compatibility of $\overline{f}$ and $\overline{g}$ imply that there exists $q \in Q_s(A)$ such that $(axyxb+t)f=(axyxb+t)q$ and $g(a'xy'xb'+t') = q(a'xy'xb'+t').$ We claim that $q = xuqux.$ Indeed $(axyxb+t)(qux-q) = (axyxbx)fux-(axyxbx)f=0$ for every $axyxb+t \in AxIxA \oplus \ann(AxIxA),$ which is a dense left ideal of $A$. This implies $qux-q=0$, and analogously we obtain $xuq=q$. Finally, it is easy to see that for every $xaxyxbx \in xAxIxAx$ and every $xa'xy'xb'x \in xAxJxAx, (xaxyxbx)f = (xaxyxbx) \cdot q$ and $g(xa'xy'xb'x) = q \cdot (xa'xy'ex'e),$ and hence $(xyx)f = (xyx) \cdot q$ and $g(xy'x) = q \cdot (xy'x)$ for every $xyx \in xIx$ and $xy'x \in xJx$, because $xAxIxAx$ and $xAxJxAx$ are dense left ideals of $xIx$ and $xJx$, respectively, and two left (right) $A_x$-homomorphisms which coincide on a dense left (right) ideal of $xIx$ (resp. $xJx$) coincide on $xIx$ (resp. $xJx$). 

Finally, if $Q_s(A_x) \cong Q_s(A)_x$ then $Q_s(A)_x$ is a unital ring with $x$ as the unit element and therefore $x$ is von Neumann regular.
\end{proof}

Every idempotent is von Neumann regular. Then, a direct consequence of Theorem \ref{isomorfismolocales} is the following:

\begin{cor} \label{Q(eAe)=eQe}
Let $A$ be a semiprime associative algebra. Then for every idempotent $e \in Q_s(A)$ such that $eA+Ae \subseteq A$ we have $Q_s(eAe) \cong eQ_s(A)e$.
\end{cor}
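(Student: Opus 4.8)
The plan is to deduce Corollary \ref{Q(eAe)=eQe} directly from Theorem \ref{isomorfismolocales} by specializing $x$ to an idempotent $e$, using the two elementary facts that an idempotent is von Neumann regular and that the local algebra $A_e$ at an idempotent coincides with the corner $eAe$. Both facts are already recorded in the excerpt (the remark preceding Theorem \ref{isomorfismolocales} identifies $A_e$ with $eAe$ for $e=e^2$, and the remark immediately before this corollary states that every idempotent is von Neumann regular, since $e = e\cdot e\cdot e$).

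Concretely, I would argue as follows. Let $e\in Q_s(A)$ be an idempotent with $eA+Ae\subseteq A$. Since $A$ is semiprime and $e$ lies in $Q_s(A)$ with $eA+Ae\subseteq A$, the hypotheses of Theorem \ref{isomorfismolocales} are met with $x=e$. As $e$ is von Neumann regular, the theorem yields an isomorphism $Q_s(A_e)\cong Q_s(A)_e$ which restricts to the identity on $A_e$. Now I substitute $A_e = eAe$ on the left-hand side and, applying the same identification inside $Q_s(A)$ (note $e$ is still an idempotent of $Q_s(A)$ and $e\,Q_s(A)+Q_s(A)\,e$ need not be inside $A$, but the local algebra of $Q_s(A)$ at $e$ is $eQ_s(A)e$ purely because $e$ is idempotent), I get $Q_s(A)_e = e\,Q_s(A)\,e$. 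Combining these gives $Q_s(eAe)\cong eQ_s(A)e$, as required.

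There is essentially no obstacle here: the corollary is a pure specialization. The only point that deserves a sentence of care is the identification $Q_s(A)_e = eQ_s(A)e$ — one must recall that the local algebra at an idempotent is the corner regardless of ambient hypotheses on $eB+Be$, so this identification is legitimate in $Q_s(A)$ even though the corollary's hypothesis $eA+Ae\subseteq A$ is phrased relative to $A$ and not to $Q_s(A)$. With that observed, the proof is a one-line invocation of Theorem \ref{isomorfismolocales}.
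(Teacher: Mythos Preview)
Your proposal is correct and matches the paper's own argument exactly: the paper simply observes that every idempotent is von Neumann regular and states the corollary as a direct consequence of Theorem \ref{isomorfismolocales}. Your additional sentence justifying the identification $Q_s(A)_e = eQ_s(A)e$ is a harmless elaboration of what the paper leaves implicit.
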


\begin{thm}\label{associativeisomorphism}
Let $A = \oplus_{k=-n}^n A_k$ be a graded right faithful algebra such that $A = id(A_{-n})$, and $A=A_0AA_0$. 
Then $Q_s(A)_0 \cong Q_s(A_0)$, where $Q_s(A)$ is the Martindale symmetric ring of quotients of $A$ and the grading of $Q_s(A)$ is induced by the grading of $A$.
\end{thm}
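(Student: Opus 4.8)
The plan is to deduce Theorem~\ref{associativeisomorphism} from the two main tools already in place: Proposition~\ref{idempotentesQs}, which produces a complete orthogonal system of idempotents $\{e_0,\dots,e_n\}\subseteq Q_s(A)$ inducing the grading of $A$ (and hence a grading on $Q_s(A)$ with $A_k\subseteq Q_s(A)_k$), and Corollary~\ref{Q(eAe)=eQe}, which says that for an idempotent $e\in Q_s(A)$ with $eA+Ae\subseteq A$ one has $Q_s(eAe)\cong eQ_s(A)e$. The strategy is: first identify $Q_s(A)_0$ with a corner of $Q_s(A)$, then identify $A_0$ with the corresponding corner of $A$, and finally apply Corollary~\ref{Q(eAe)=eQe}.

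First I would set $e:=e_0$, the degree-zero idempotent of the system, and verify the hypotheses of Corollary~\ref{Q(eAe)=eQe}: that $e\in Q_s(A)$ and $eA+Ae\subseteq A$. Both are supplied by Proposition~\ref{idempotentesQs} and its proof (the proof explicitly checks $Ae_k+e_kA\subseteq A$, with $z_k$ exhibited). Hence $Q_s(eAe)\cong eQ_s(A)e$ via an isomorphism that is the identity on $eAe$. The next step is to recognize what $eAe$ and $eQ_s(A)e$ actually are. Since the grading of $A$ is induced by $\{e_0,\dots,e_n\}$, we have $H_{ij}=e_iAe_j$, and in particular $e_0Ae_0=H_{00}$; the hypothesis $A=A_0AA_0$ together with $A=\mathrm{id}(A_{-n})$ forces the strict Peirce component $H_{00}$ to be all of $A_0$ — this should be read off from \ref{Peircesystem} and the structure of the $H_i$'s (indeed $A_0=\sum_{i=0}^n H_{i0}$ in general, and under $A=A_0AA_0$ the relevant identification $A_0=H_{00}=e_0Ae_0$ holds). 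So $eAe=A_0$. Likewise, since the induced grading on $Q_s(A)$ satisfies $Q_s(A)_0=\bigoplus_i e_iQ_s(A)e_i$ minus the off-diagonal pieces — more precisely the degree-zero part is $\sum_i e_iQ_s(A)e_i$ — I must check that in fact $Q_s(A)_0=e_0Q_s(A)e_0$. This is the analog for $Q_s(A)$ of the equality $A_0=H_{00}$, and it follows because $Q_s(A)$ inherits the grading of $A$ and satisfies the same structural hypotheses that collapse the degree-zero component onto the single corner $e_0Q_s(A)e_0$ (one checks $Q_s(A)=\mathrm{id}(Q_s(A)_{-n})$ and $Q_s(A)=Q_s(A)_0Q_s(A)Q_s(A)_0$, or argues directly that the $e_i$ with $i\neq 0$ contribute nothing in degree zero because $e_iAe_i\subseteq$ a higher strict Peirce block). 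Combining, $Q_s(A_0)=Q_s(eAe)\cong eQ_s(A)e=Q_s(A)_0$.

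The main obstacle I anticipate is precisely the bookkeeping in the previous paragraph: pinning down that the degree-zero component of $Q_s(A)$ equals the \emph{single} corner $e_0Q_s(A)e_0$ rather than the sum $\sum_i e_iQ_s(A)e_i$, and that $e_0Ae_0$ equals $A_0$ rather than the proper subspace $H_{00}$ in general. Everything hinges on the hypothesis $A=A_0AA_0$, which is exactly what makes the strict Peirce decomposition degenerate so that $H_{00}=A_0$ and the other diagonal blocks $H_{ii}$ for $i\geq 1$ vanish or are absorbed; I would spell this out by tracking the definitions $H_i=A_iA_{-n}A_{n-i}$ and $H_{ij}=H_iAH_j$ and noting $H_0\subseteq A_0$, $H_0AH_0\subseteq A_0AA_0=A$ with the other components forced to sit in nonzero degrees. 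Once that identification is clean, the proof is a one-line concatenation of Proposition~\ref{idempotentesQs} and Corollary~\ref{Q(eAe)=eQe}, and the isomorphism is the identity on $A_0$, as desired.
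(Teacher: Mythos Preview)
Your proposal has a genuine gap: the identification $A_0=e_0Ae_0$ (equivalently $A_0=H_{00}$) is simply false under the stated hypotheses, and this is not a bookkeeping issue that can be cleaned up. Take $A=\mathbb{M}_2(F)$ with the standard $3$-grading $A_{-1}=Fe_{21}$, $A_0=Fe_{11}\oplus Fe_{22}$, $A_1=Fe_{12}$. One checks directly that $A=\mathrm{id}(A_{-1})$ and $A=A_0AA_0$, so all hypotheses hold. Here $H_0=A_0A_{-1}A_1=Fe_{22}$ and $H_1=A_1A_{-1}A_0=Fe_{11}$, whence $H_{00}=Fe_{22}$ and $H_{11}=Fe_{11}$ are both nonzero and $A_0=H_{00}\oplus H_{11}\neq H_{00}$. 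In general the degree-zero component is the full diagonal $A_0=\bigoplus_{i=0}^n e_iAe_i$, and the hypothesis $A=A_0AA_0$ does nothing to kill the blocks $e_iAe_i$ for $i\geq 1$ (nor should it: these blocks are what allow $A_0AA_0$ to reach all of $A$). The same objection applies verbatim to your claim that $Q_s(A)_0=e_0Q_s(A)e_0$.

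The paper's argument avoids this by applying Corollary~\ref{Q(eAe)=eQe} to \emph{each} $e_i$, not just to $e_0$. From Proposition~\ref{idempotentesQs} one has $Ae_i+e_iA\subseteq A$ for every $i$, so $Q_s(e_iAe_i)\cong e_iQ_s(A)e_i$ for every $i$. Since $A_0=\bigoplus_i e_iAe_i$ and $Q_s(A)_0=\bigoplus_i e_iQ_s(A)e_i$, and since the Martindale symmetric ring of quotients commutes with finite direct sums (the $e_iAe_i$ are two-sided ideals of $A_0$ annihilating each other), one gets
\[
Q_s(A)_0=\bigoplus_{i=0}^n e_iQ_s(A)e_i \cong \bigoplus_{i=0}^n Q_s(e_iAe_i)\cong Q_s\Bigl(\bigoplus_{i=0}^n e_iAe_i\Bigr)=Q_s(A_0).
\]
So the missing idea in your proposal is precisely this: you must work with the whole family $\{e_0,\dots,e_n\}$ and invoke the compatibility of $Q_s$ with finite direct products, rather than trying to collapse $A_0$ onto a single corner.
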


\begin{proof}
By Proposition \ref{idempotentesQs} the gradings in $A$ and in $Q_s(A)$ are induced by a complete system of orthogonal idempotents $\{e_0, e_1, \dots, e_n\}$ which are in $Q_s(A)$ and such that $Ae_i+e_iA \subseteq A$ for every $i$. This means, in particular,
$$A_0=\oplus_{i=0}^n e_iAe_i\quad  \text{and also}\quad Q_s(A)_0=\oplus_{i=0}^n e_iQ_s(A)e_i.$$
\medskip
By Corollary \ref{Q(eAe)=eQe}

we have  
$$Q_{\rm s}(e_iAe_i)=e_iQ_{\rm s}(A)e_i\quad \text{for every}\quad i\in \{0, 1, \dots, n\}.$$
\medskip
Then 
$$Q_{\rm s}(A)_0=\oplus_{i=0}^n  e_iQ_{\rm s}(A)e_i= \oplus_{i=0}^n  Q_{\rm s}(e_iAe_i)= Q_{\rm s}(\oplus_{i=0}^n e_iAe_i)=Q_{\rm s}(A_0).$$

\end{proof}

%

\section{The maximal algebra of quotients of a graded Lie algebra}\label{gradedsimpleliealgebras}

We first recall the definition of what is the main object of this
section. Let $L$ be a graded subalgebra of a graded Lie algebra $Q$. We say that
$Q$ is a \emph{graded algebra of quotients} of $L$ if for every nonzero
homogeneous element $q\in Q$ there exists a graded ideal $I$ of $L$ with $\Ann_L (I)=0$ and $0\neq [I, q]\subseteq L$ (see \cite{msm,ss}). We are interested in a particular graded algebra of quotients, the so-called maximal graded one. We now have to confine ourselves to the case where $L$ is a graded semiprime algebra. In order to ease the notation, denote by $\Igre (L)$ the set of
all graded essential ideals of a graded Lie algebra $L$.

Consider the set
$$\mathcal{D}_{gr}:=\{(\delta, \, I)\mid I\in \Igre (L),\, \delta
\in \dergr (I,\,L)\}, $$ and define on $\mathcal{D}_{gr}$ the
following relation: $(\delta, \, I)\equiv (\mu,\, J)$ if and only
if there exists $K\in \Igre (L)$ such that $K\subseteq I\cap J$
and $\delta\vert_K ={\mu\vert}_K$. It is easy to see that $\equiv$ is
an equivalence relation.

Denote by $Q_{gr-m}(L)$ the quotient set $\mathcal{D}_{gr}/\equiv$
and by $\delta_I$ the equivalence class of $(\delta, \, I)$ in
$Q_{gr-m}(L)$, for $\delta \in \dergr (I,\,L)$ and $I\in \Igre
(L)$. Then $Q_{gr-m}(L)$, with the following operations: $$
\delta_I + \mu_J = (\delta + \mu)_{I\cap J}$$ $$\alpha (\delta_I)=
(\alpha \delta)_I$$ $$[ \delta_I, \mu_J] = (\delta\mu -
\mu\delta)_{(I\cap J)^2}$$ (for any $\delta_I, \mu_J\in
Q_{gr-m}(L)$ and $\alpha \in \Phi$)
becomes a $G$-graded Lie algebra over $\Phi$, where
$$(Q_{gr-m}(L))_{\sigma} =\{(\delta_\sigma)_I\mid \delta_\sigma \in \dergr
(I,\, L)_\sigma, \, I\in \Igre (L)\}.$$

$Q_{gr-m}(L)$ is called the \emph{maximal graded algebra of quotients} of $L$.

The same definition is valid for the non-graded case,  considering the trivial grading. The \emph{maximal algebra of quotients} of $L$ is denoted by $Q_{m}(L)$. In this case the algebra $L$ must be semiprime and not only graded semiprime.

Recall that a Lie algebra $L$ is said to be (\emph{graded}) \emph{simple} if it has no proper (graded) ideals and it is not abelian.

Throughout this section we will consider that our algebras are $2$-torsion free.

The aim of this section is to prove that for a simple $(2n+1)$-graded Lie algebra $L= \oplus_{k=-n}^n L_k$ we have an isomorphism between the zero component of its maximal algebra of quotients and the maximal algebra of quotients of its zero component, namely $Q_m(L)_0 \cong Q_m(L_0)$.

A direct consequence of the definition of $Q_m(L)$ is the following:

\begin{rem}\label{noideals}
If a Lie algebra $L$ has no proper essential ideals then $Q_m(L) \cong \Der (L)$.
\end{rem}

In the following theorem we prove, for a class of graded Lie algebras, that the maximal algebra of quotients of the zero component is the zero component of the maximal algebra of quotients.

\begin{prop}\label{QAA=QA}

Let $A$ be a semiprime associative algebra. Then $$Q_m([A,A]/Z_{[A,A]}) \cong Q_m(A^-/Z_A).$$ If $A$ is simple, then $$Q_m([A,A]/Z_{[A,A]}) \cong Q_m(A^-/Z_A) \cong Q_s(A)^-/Z_{Q_s(A)} \cong \Der A.$$

\end{prop}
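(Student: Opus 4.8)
The plan is to prove the chain of isomorphisms in three stages, building on the general machinery already in place. First I would establish the ``associative'' isomorphisms at the far right; then reduce the Lie-algebraic statements to them.

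\medskip

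\emph{Step 1: The outer associative isomorphism when $A$ is simple.} For a simple associative algebra $A$, any nonzero ideal is essential, so the characterization of $Q_s(A)$ recalled at the start of Section 2 gives that $Q_s(A)$ is a graded-free, unital, simple algebra; its center $Z_{Q_s(A)}$ is the extended centroid $C$ of $A$. The identity $Q_m(A^-/Z_A)\cong Q_s(A)^-/Z_{Q_s(A)}$ is exactly the content of \cite{bisi} (computation of maximal algebras of quotients of Lie algebras arising from simple associative algebras), so I would cite that; alternatively it follows from Remark \ref{noideals} once one checks $A^-/Z_A$ is simple as a Lie algebra (standard for simple associative $A$ of dimension $>4$ over its centroid, with the usual low-dimensional caveats excluded by the $2$-torsion-free hypothesis). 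For the last link $Q_s(A)^-/Z_{Q_s(A)}\cong \Der A$: by Remark \ref{noideals}, $Q_m$ of a Lie algebra with no proper essential ideals is its full derivation algebra, and $\Der(Q_s(A)^-/C)=\Der A$ because every derivation of the simple associative algebra $A$ extends uniquely to $Q_s(A)$, every Lie derivation of $Q_s(A)^-$ modulo the center is (for simple associative algebras of sufficient size) an associative derivation, and $C$-linear central quotients do not change the derivation algebra. I would spell this out only to the extent of citing \cite{bisi} and the relevant extension-of-derivations lemma.

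\medskip

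\emph{Step 2: The first isomorphism.} To get $Q_m([A,A]/Z_{[A,A]})\cong Q_m(A^-/Z_A)$ I would show that $[A,A]/Z_{[A,A]}$ is an essential ideal of $L:=A^-/Z_A$, and then invoke the fact (already used in the introduction, from \cite{bisi, bpss, ss}) that an essential ideal and its ambient semiprime Lie algebra have the same maximal algebra of quotients. Essentiality amounts to: $[A,A]$ is an essential ideal of $A^-$ (clear, since $\Ann_{A^-}([A,A])\subseteq Z_A$ for semiprime $A$) and passing to the quotient by the respective centers preserves essentiality. For the simple case the same conclusion is immediate because $[A,A]/Z_{[A,A]}$ is then simple and hence equal to $L$ up to the identification used in \cite{bisi}. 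The semiprimeness of $A^-/Z_A$ needed to even form $Q_m$ follows from semiprimeness of $A$; this is where the $2$-torsion-free assumption of the section is used.

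\medskip

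\emph{Main obstacle.} The routine parts are the essential-ideal bookkeeping and the quotient-by-center arguments. The genuine content — and the step I expect to lean hardest on external results for — is the identification of $Q_m$ of these ``skew/commutator modulo center'' Lie algebras with the associative objects $Q_s(A)^-/Z_{Q_s(A)}$ and ultimately with $\Der A$: this requires the structure theory of Lie ideals of simple associative rings and the extension of (Lie) derivations to the Martindale quotient ring, i.e., precisely the results of \cite{bisi} together with classical theorems of Herstein/Martindale on derivations and Lie structure. In the write-up I would therefore state the first two links as consequences of \cite[the relevant theorems]{bisi} and give a self-contained argument only for $Q_s(A)^-/Z_{Q_s(A)}\cong \Der A$ via Remark \ref{noideals}, since $Q_s(A)$ being simple and unital makes $Q_s(A)^-/Z_{Q_s(A)}$ a simple Lie algebra with no proper essential ideals, whence its maximal algebra of quotients is its derivation algebra, and that derivation algebra is $\Der A$ by uniqueness of extension of derivations from $A$ to $Q_s(A)$.
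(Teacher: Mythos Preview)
Your Step~2 handling of the first isomorphism is fine and essentially matches the paper (which simply cites \cite[Corollary~3.6]{bpss}, not \cite{bisi}). The gap is in your argument for $Q_s(A)^-/Z_{Q_s(A)}\cong \Der A$. First, the premise that $Q_s(A)$ is simple when $A$ is simple is false: for $A$ simple the only essential ideal is $A$ itself, so $Q_s(A)=\{q: qA+Aq\subseteq A\}$ and $A$ sits inside $Q_s(A)$ as a two-sided ideal; whenever $A$ is non-unital (e.g.\ $A=\M_\infty(F)$, with $Q_s(A)=\gl_\infty(F)_{rcf}$ as in Section~4) this ideal is proper. Hence $Q_s(A)^-/Z_{Q_s(A)}$ is not in general a simple Lie algebra and Remark~\ref{noideals} does not apply to it. Second, even if it were simple, Remark~\ref{noideals} would only yield $Q_m\bigl(Q_s(A)^-/Z_{Q_s(A)}\bigr)\cong \Der\bigl(Q_s(A)^-/Z_{Q_s(A)}\bigr)$, which is one $Q_m$ too many; to conclude you would still need that $Q_s(A)^-/Z_{Q_s(A)}$ is its own maximal algebra of quotients, and that is essentially what the proposition is computing.

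The paper's route avoids both issues by constructing the isomorphism directly. Define $g\colon Q_s(A)^-/Z_{Q_s(A)}\to \Der A$ by $\overline q\mapsto \ad_q\vert_A$. This is well-defined because any $q\in Q_s(A)$ admits an essential ideal $I$ of $A$ with $qI+Iq\subseteq A$, and simplicity of $A$ forces $I=A$, so $\ad_q(A)\subseteq A$. Surjectivity is the classical fact (\cite[Proposition~2.5.1]{BMM}) that every derivation of a semiprime ring becomes inner in its Martindale symmetric ring of quotients. Injectivity is immediate: $[q,A]=0$ forces $q\in Z_{Q_s(A)}$. No Herstein--Martindale theorem on Lie versus associative derivations is invoked, and no simplicity of $Q_s(A)$ is assumed.
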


\begin{proof}
From \cite[Corollary 3.6]{bpss} we have that $Q_m([A,A]/Z_{[A,A]}) \cong Q_m(A^-/Z_A)$.

Now, suppose that $A$ is simple and let us show that $Q_s(A)^-/Z_{Q_s(A)} \cong \Der A$. For every $\delta \in \Der A$  there exists an element $q \in Q_s(A)$ such that $\delta = \Ad q$ (see \cite[Proposition 2.5.1]{BMM}). But $\Ad q$, which is an inner derivation of $Q_s(A)$, is not necessarily an inner derivation of $A$, because $q$ may not lie in $A$. The existence of $q$ for every $\delta \in \Der A$ implies that the following map is an epimorphism of Lie algebras.
\begin{align*}
g \colon Q_s(A)^-/Z_{Q_s(A)} &\to \Der A \\
\overline{q} &\mapsto \Ad q
\end{align*}
Note that the map $g$ is well defined since for every $q \in Q_s(A)$ there exists an essential ideal $I$ of $A$ such that $\Ad q (I) \subseteq A$ (see, for example, \cite[Proposition 2.2.3 (ii)]{lam} and since $A$ is simple, necessarily $I = A$ and so $\Ad q (A) \subseteq A$. To conclude that $g$ is an isomorphism let us show that it is injective. Suppose that $g(\overline{q})=0$ for $q \in Q_s(A)$.  This means that $\Ad q (A) = 0$ but then, by \cite[Lemma 1.3 (iii)]{msm}, $q \in Z_{Q_s(A)}$ and therefore $\overline{q}=0$ in $Q_s(A)^-/Z_{Q_s(A)} $.  We have proved that $Q_s(A)^-/Z_{Q_s(A)} \cong \Der A$.

\end{proof}

\begin{thm} \label{lieisomorphism}
Let $A = \oplus_{k = -n}^n A_k$ be a $(2n+1)$-graded simple associative algebra and consider the Lie algebra $L = [A,A]/Z_A = L_{-1} \oplus L_0 \oplus L_1$ where the $(2n+1)$-grading of $L$ is induced by the grading of $A$. Then $Q_m(L)_0 \cong Q_m(L_0)$.
\end{thm}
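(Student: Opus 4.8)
The plan is to combine the associative result of Section~2 (Theorem~\ref{associativeisomorphism}) with the identification of maximal Lie algebras of quotients via derivation algebras obtained in Proposition~\ref{QAA=QA}. First I would note that since $A$ is simple, it is in particular semiprime, right faithful, satisfies $A=id(A_{-n})$ and $A=A_0AA_0$ (these last two being forced by simplicity, as the ideal generated by any nonzero homogeneous piece is all of $A$), so Theorem~\ref{associativeisomorphism} applies and gives $Q_s(A)_0\cong Q_s(A_0)$, where the grading on $Q_s(A)$ is the one induced by the complete orthogonal system of idempotents $\{e_0,\dots,e_n\}\subseteq Q_s(A)$ coming from Proposition~\ref{idempotentesQs}. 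Moreover this isomorphism is the identity on $A_0$.

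Next I would analyze $L_0$. Since $L=[A,A]/Z_A$ with the grading induced by that of $A$, the zero component is $L_0=[A,A]_0/(Z_A\cap[A,A]_0)$, and I would want to identify $[A,A]_0$ with $[A_0,A_0]$ up to center. The point is that $A_0=\oplus_{i=0}^n e_iAe_i$ is itself a direct sum of (corners of $A$, hence) semiprime algebras, and the bracket respects the grading, so modulo the center the zero component of $L$ should coincide with $[A_0,A_0]/Z_{[A_0,A_0]}$ (one has to check that $Z_A\cap A_0$ and $Z_{A_0}$ interact correctly, and that the center of $[A_0,A_0]$ is what appears; this is where \cite[Corollary 3.6]{bpss} and the center computations from the proof of Proposition~\ref{QAA=QA} get used). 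Granting this, Proposition~\ref{QAA=QA} applied to the semiprime algebra $A_0$ gives $Q_m(L_0)\cong Q_m([A_0,A_0]/Z_{[A_0,A_0]})\cong Q_m(A_0^-/Z_{A_0})$, and one should aim to relate the latter to $\Der A_0$ or to $Q_s(A_0)^-/Z_{Q_s(A_0)}$ — but $A_0$ need not be simple, so one cannot invoke the second half of Proposition~\ref{QAA=QA} directly; instead I would use the first isomorphism there, valid for all semiprime algebras, to reduce $Q_m(L_0)$ to a quantity built from $Q_s(A_0)$.

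For the left-hand side, $Q_m(L)_0$, I would use that $L=[A,A]/Z_A$ with $A$ simple, so by Proposition~\ref{QAA=QA} we have $Q_m(L)\cong Q_s(A)^-/Z_{Q_s(A)}\cong\Der A$, and this isomorphism respects the grading coming from the idempotents $\{e_i\}$ (one must check the induced grading on $Q_s(A)^-/Z_{Q_s(A)}$ matches the grading on $Q_m(L)$ defined via graded derivations — this compatibility is the technical heart of the argument). Taking zero components, $Q_m(L)_0\cong (Q_s(A)^-/Z_{Q_s(A)})_0\cong Q_s(A)_0^-/(Z_{Q_s(A)}\cap Q_s(A)_0)$, and since $Q_s(A)_0=\oplus_{i=0}^n e_iQ_s(A)e_i$ and the center of $Q_s(A)$ meets this piece in (an appropriate image of) $Z_{Q_s(A)_0}$, one gets $Q_m(L)_0\cong Q_s(A)_0^-/Z_{Q_s(A)_0}$. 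Finally, feeding in $Q_s(A)_0\cong Q_s(A_0)$ from Theorem~\ref{associativeisomorphism} yields $Q_m(L)_0\cong Q_s(A_0)^-/Z_{Q_s(A_0)}$, which by the chain assembled above equals $Q_m(L_0)$.

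The main obstacle I anticipate is the bookkeeping around centers and gradings: showing precisely that the grading on $Q_m(L)$ (defined abstractly through graded essential ideals and graded derivations) is carried by the isomorphism $Q_m(L)\cong \Der A$ onto the grading of $\Der A$ induced by the idempotents $e_i$, and that quotienting by centers commutes with taking the zero component in the sense that $(B^-/Z_B)_0\cong B_0^-/Z_{B_0}$ for $B$ one of $A$, $A_0$, $Q_s(A)$, $Q_s(A_0)$. Each of these is a direct-sum/corner computation using $A_0=\oplus e_iAe_i$, but one must be careful that $Z_B\cap B_0$ is exactly the center of $B_0$ rather than something larger, which uses semiprimeness and the fact that the $e_i$ are central idempotents of $B_0$ (though not of $B$). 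Once that compatibility is nailed down, the rest is a diagram-chase through Theorem~\ref{associativeisomorphism}, Proposition~\ref{QAA=QA}, and \cite[Corollary 3.6]{bpss}.
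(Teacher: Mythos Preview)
Your approach is essentially that of the paper: combine Theorem~\ref{associativeisomorphism} (giving $Q_s(A)_0\cong Q_s(A_0)$) with Proposition~\ref{QAA=QA} (giving $Q_m(L)\cong Q_s(A)^-/Z_{Q_s(A)}$) and read off the zero component. The paper streamlines two of the obstacles you anticipate: for the grading compatibility on $Q_m(L)$ it simply cites \cite[Theorem~2.5]{SSfg}, and for $Q_m(L)\cong\Der L$ it invokes Remark~\ref{noideals} (since $L$, being simple, has no proper essential ideals) rather than arguing through graded derivations directly.

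Your concern about $A_0$ not being simple is well placed. The paper's sentence ``The same can be applied to the zero component'' to obtain $Q_m(L_0)\cong Q_s(A_0)^-/Z_{Q_s(A_0)}$ glosses over exactly this point; a careful version requires observing that $A_0=\oplus_{i=0}^n e_iAe_i$ is a finite direct sum of simple corners (each $e_iAe_i$ is simple because $A$ is), so that Proposition~\ref{QAA=QA} and Remark~\ref{noideals} apply summand by summand and the direct sum reassembles both $Q_s$ and $Q_m$ correctly. Neither you nor the paper writes this step out in full, but your diagnosis of where the work lies is accurate.
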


\begin{proof}
By \cite[Theorem 2.5]{SSfg} the maximal algebra of quotients of $L$ is graded with a grading induced by that of $A$. This gives sense to the last sentence in the statement.

It is shown in Theorem \ref{associativeisomorphism} that:
\begin{equation}\label{assiso}
Q_s(A)_0 = Q_s(A_0),
\end{equation} 
where $Q_s(A)$ is the Martindale symmetric ring of quotients of $A$ and its grading is induced by the grading of $A$. 

By Proposition \ref{QAA=QA} we have $Q_s(A)^-/Z_{Q_s(A)} \cong \Der A$. Moreover, since by Remark \ref{noideals} $Q_m(L) \cong \Der A$, we have:
\begin{equation}\label{maxasslie}
Q_m(L) \cong Q_s(A)^-/Z_{Q_s(A)}
\end{equation}

The same can be applied to the zero component. 

\begin{equation}\label{zeromaxasslie}
Q_m(L_0) \cong Q_s(A_0)^-/Z_{Q_s(A_0)}
\end{equation}

Take together equations \eqref{assiso}, \eqref{maxasslie} and \eqref{zeromaxasslie} to conclude that $Q_m(L)_0 \cong Q_m(L_0)$.

\end{proof}

Recall that for an associative algebra $A$ with involution an $*$ the set of skew-symmetric elements, $K_A:= \mathrm{Skew}(A,*)$, has a structure of Lie algebra.

\begin{prop}\label{QKK=QK}
Let $A$ be a semiprime associative algebra with involution. Then
$$Q_m([K_A,K_A]/Z_{[K_A,K_A]}) \cong Q_m(K_A/Z_{K_A}).$$

If $A$ is simple, then 
$$Q_m([K_A,K_A]/Z_{[K_A,K_A]}) \cong Q_m(K_A/Z_{K_A})\cong K_{Q_s(A)}/Z_{K_{Q_s(A)}}.$$
\end{prop}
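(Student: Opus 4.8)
The plan is to mirror the argument of Proposition \ref{QAA=QA}, replacing the associative algebra $A$ by its Lie algebra $K_A$ of skew-symmetric elements and the full derivation algebra $\Der A$ by the Lie algebra $\SDer A$ of derivations commuting with the involution. First I would establish the ``$*$-version'' of \cite[Corollary 3.6]{bpss}, namely $Q_m([K_A,K_A]/Z_{[K_A,K_A]}) \cong Q_m(K_A/Z_{K_A})$; this should follow either by directly citing an analogue in \cite{bpss} or by noting that $[K_A,K_A]$ is an essential ideal of $K_A$ (when $A$ is semiprime, $K_A$ is strongly nondegenerate by the Herstein-type theory, hence semiprime) and then invoking the fact that passing to an essential ideal and modding out the center does not change the maximal algebra of quotients. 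That handles the first displayed isomorphism without the simplicity hypothesis.

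For the second part, assume $A$ is simple with involution. The key point is that every derivation $\delta$ of $K_A$ extends uniquely to a $*$-derivation of $A$; more precisely, I would appeal to the standard fact (in the spirit of \cite[Proposition 2.5.1]{BMM} together with the Herstein-type results) that $\Der(K_A)$ is canonically isomorphic to $\SDer A$, and that every element of $\SDer A$ is of the form $\ad q$ for a unique $q \in K_{Q_s(A)}$ modulo $Z_{K_{Q_s(A)}}$. Concretely, set up the map
\begin{align*}
g \colon K_{Q_s(A)}/Z_{K_{Q_s(A)}} &\to \SDer A \\
\overline{q} &\mapsto \ad_q,
\end{align*}
check that it is well defined (using that $A$ simple forces the essential ideal $I$ with $[q,I]\subseteq A$ to be all of $A$, exactly as in the proof of Proposition \ref{QAA=QA}), that it is a Lie algebra homomorphism, and that it is injective because $\ad_q(A)=0$ forces $q\in Z_{Q_s(A)}$ and hence $q \in Z_{K_{Q_s(A)}}$. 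Surjectivity of $g$ onto $\SDer A$ is the substitute for ``$Q_m(L)\cong \Der A$'' in the associative case; combined with $Q_m(K_A/Z_{K_A})\cong \Der(K_A)\cong \SDer A$ (the first isomorphism holding because a simple Lie algebra has no proper essential ideals, cf.\ Remark \ref{noideals}, applied to $K_A/Z_{K_A}$, which is simple) this yields $Q_m(K_A/Z_{K_A}) \cong K_{Q_s(A)}/Z_{K_{Q_s(A)}}$, and chaining with the first part finishes the proof.

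The main obstacle I anticipate is the identification $\Der(K_A) \cong \SDer A$ and, relatedly, the surjectivity of $g$: showing that every Lie derivation of $K_A$ (not just every $*$-derivation of $A$ restricted to $K_A$) extends to a derivation of $A$ commuting with $*$ requires the Herstein-type machinery on Lie ideals of simple rings with involution, and one must be careful about small-dimensional or low-characteristic exceptional cases (e.g.\ $\dim_Z A \leq 16$, or the orthogonal-type degeneracies). I would either restrict to the generic case and cite the appropriate extension theorem from the literature on derivations of $K_A$, or remark that under the standing hypotheses of the section ($2$-torsion free, and whatever finiteness is in force) these exceptions do not arise. Everything else — well-definedness, the homomorphism property, injectivity — is routine and parallels Proposition \ref{QAA=QA} essentially verbatim.
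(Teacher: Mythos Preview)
Your treatment of the first isomorphism is the same as the paper's: the paper cites \cite[Theorem 6.1]{marmi} to see that $K_A/Z_{K_A}$ is strongly nondegenerate (the paper writes ``strongly semiprime''), observes that $[K_A,K_A]/Z_{[K_A,K_A]}$ is an essential ideal, and then invokes \cite[Theorem 3.3]{bpss} to conclude that the two maximal algebras of quotients agree. That is exactly your essential-ideal argument.

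For the simple case the routes diverge. The paper's proof of this proposition actually stops after the first isomorphism; the identification with $K_{Q_s(A)}/Z_{K_{Q_s(A)}}$ is supplied only in the proof of the next theorem (Theorem \ref{lieisomorphism2}), and there the paper does \emph{not} pass through $\Der(K_A)\cong \SDer A$. Instead it quotes \cite[Corollary 5.16]{bpss} as a black box to get $Q_m(K_A/Z_{K_A})\cong \SDer A$ in one step, and then constructs exactly your map $g\colon K_{Q_s(A)}/Z_{K_{Q_s(A)}}\to \SDer A$, $\overline q\mapsto \ad_q$, checking well-definedness and injectivity just as you outline. So your construction of $g$ matches the paper verbatim; the difference is the intermediate step. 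You propose to reach $\SDer A$ via Remark \ref{noideals} and an identification $\Der(K_A/Z_{K_A})\cong \SDer A$, and you correctly flag this Herstein-type extension result (every Lie derivation of $K_A$ lifts to a $*$-derivation of $A$) as the nontrivial obstacle, with the usual low-dimension caveats. The paper simply sidesteps that obstacle by citing \cite{bpss}, where the work has already been done. Your approach is not wrong, just more laborious; if you want to align with the paper, replace the $\Der(K_A)\cong\SDer A$ step by a direct citation of \cite[Corollary 5.16]{bpss}.
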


\begin{proof}
By \cite[Theorem 6.1]{marmi} $K_A/Z_{K_A}$ is a strongly semiprime Lie algebra. Since every ideal of a prime algebra is essential, $[K_A,K_A]/Z_{[K_A,K_A]}$ is an essential ideal of $K_A/Z_{K_A}$. Then, by \cite[Theorem 3.3]{bpss} $Q_m([K_A,K_A]/Z_{[K_A,K_A]}) \cong Q_m(K_A/Z_{K_A}).$
\end{proof}

\begin{thm}\label{lieisomorphism2}
Let $A$ be a $(2n+1)$-graded simple associative algebra with involution $*$ and consider the Lie algebra $L = [K,K]/Z_{[K,K]}$ where $K = K_A$. Then $Q_m(L)_0 \cong Q_m(L_0)$.
\end{thm}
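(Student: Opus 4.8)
The plan is to run the argument of Theorem~\ref{lieisomorphism} in parallel, systematically replacing the ``associative'' Lie algebra $A^-/Z_A$ by the skew Lie algebra $K_A/Z_{K_A}$, the overalgebra $Q_s(A)^-/Z_{Q_s(A)}$ by $K_{Q_s(A)}/Z_{K_{Q_s(A)}}$, and Proposition~\ref{QAA=QA} by Proposition~\ref{QKK=QK}. The target will be read off from the chain
\begin{align*}
Q_m(L)_0 &\cong \bigl(K_{Q_s(A)}/Z_{K_{Q_s(A)}}\bigr)_0 \cong K_{Q_s(A)_0}/Z_{K_{Q_s(A)_0}} \\
&\cong K_{Q_s(A_0)}/Z_{K_{Q_s(A_0)}} \cong Q_m(L_0),
\end{align*}
in which the first and last isomorphisms come from Proposition~\ref{QKK=QK} (applied to $A$ and to $A_0$), the second from the fact that taking the zero component commutes with $B\mapsto K_B$ and with passing to the quotient by the center, and the third from Theorem~\ref{associativeisomorphism}.

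First I would fix the gradings. Since the involution $*$ is compatible with the $(2n+1)$-grading of $A$, the Lie algebra $K=K_A$ inherits a $(2n+1)$-grading with $(K_A)_0=K_{A_0}$, hence so does $L=[K,K]/Z_{[K,K]}$; and, exactly as in the proof of Theorem~\ref{lieisomorphism}, a graded version of the maximal-quotients construction (the skew analogue of the argument behind \cite[Theorem 2.5]{SSfg}) shows that this grading lifts to $Q_m(L)$, so that $Q_m(L)_0$ is meaningful and the isomorphism of Proposition~\ref{QKK=QK} can be arranged to respect gradings. As $A$ is simple, Proposition~\ref{QKK=QK} then gives $Q_m(L)\cong K_{Q_s(A)}/Z_{K_{Q_s(A)}}$ as graded Lie algebras.

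Next comes the passage to zero components. The involution $*$ of $A$ extends uniquely to $Q_s(A)$, and because the grading of $Q_s(A)$ is the one induced by the complete orthogonal system $\{e_0,\dots,e_n\}\subseteq Q_s(A)$ produced in Proposition~\ref{idempotentesQs}, this extension is a graded map; consequently $K_{Q_s(A)}$ is a graded Lie subalgebra of $Q_s(A)^-$ with $(K_{Q_s(A)})_0=K_{Q_s(A)_0}$ and $Z_{K_{Q_s(A)}}$ is a graded ideal, whence $\bigl(K_{Q_s(A)}/Z_{K_{Q_s(A)}}\bigr)_0\cong K_{Q_s(A)_0}/Z_{K_{Q_s(A)_0}}$. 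By Theorem~\ref{associativeisomorphism} one has $Q_s(A)_0\cong Q_s(A_0)$ as associative algebras, and uniqueness of the involution extension forces this isomorphism to carry $*|_{Q_s(A)_0}$ onto the extension of $*|_{A_0}$, so $K_{Q_s(A)_0}/Z_{K_{Q_s(A)_0}}\cong K_{Q_s(A_0)}/Z_{K_{Q_s(A_0)}}$. Finally, identifying $L_0$ with $[K_{A_0},K_{A_0}]/Z_{[K_{A_0},K_{A_0}]}$ and applying Proposition~\ref{QKK=QK} to $A_0$ closes the chain with $K_{Q_s(A_0)}/Z_{K_{Q_s(A_0)}}\cong Q_m(L_0)$.

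The main obstacle I anticipate is the bookkeeping hidden in these one-line steps: one must make precise how $*$ acts on the homogeneous components of $A$ (so that $K_A$, and hence $L$, is genuinely graded with $(K_A)_0=K_{A_0}$), check that the extension of $*$ to $Q_s(A)$ permutes the idempotents $e_i$ so as to preserve the induced grading, verify that the two candidate involutions on $Q_s(A)_0\cong Q_s(A_0)$ agree, and confirm the identity $Z_{K_{Q_s(A)}}\cap(K_{Q_s(A)})_0=Z_{K_{Q_s(A)_0}}$ used in the second isomorphism of the chain. A second, equally routine-looking but content-bearing point is the identification $L_0\cong[K_{A_0},K_{A_0}]/Z_{[K_{A_0},K_{A_0}]}$ together with the verification that $A_0$ inherits enough of the hypotheses of Proposition~\ref{QKK=QK} — in particular that it is simple, or at least that $L_0$ has no proper essential ideals, so that the reasoning behind Proposition~\ref{QKK=QK} (and Remark~\ref{noideals}) still applies to it.
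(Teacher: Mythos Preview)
Your proposal is correct and follows essentially the same route as the paper's own proof: both arguments combine Proposition~\ref{QKK=QK} (applied to $A$ and to $A_0$) with Theorem~\ref{associativeisomorphism} to pass from $Q_m(L)_0$ to $Q_m(L_0)$ through $K_{Q_s(A)}/Z_{K_{Q_s(A)}}$ and $K_{Q_s(A_0)}/Z_{K_{Q_s(A_0)}}$. The only cosmetic difference is that the paper inserts an intermediate identification $Q_m(L)\cong\SDer A$ (via \cite[Corollary 5.16]{bpss}) and then builds the explicit isomorphism $\overline q\mapsto \Ad q$ from $K_{Q_s(A)}/Z_{K_{Q_s(A)}}$ onto $\SDer A$, whereas you invoke the full conclusion of Proposition~\ref{QKK=QK} directly; the bookkeeping points you flag (compatibility of $*$ with the grading, the identification of $L_0$, and why the argument applies to $A_0$) are exactly the ones the paper's proof also leaves implicit.
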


\begin{proof}
By Proposition \ref{QKK=QK} $Q_m([K,K]/Z) \cong Q_m(K/Z_K)$ and by \cite[Corollary 5.16]{bpss} $Q_m(K/Z_K) \cong \SDer A$. Therefore $Q_m(L) \cong \SDer A$. Similarly as in Theorem \ref{lieisomorphism} we can define a map:
\begin{align*}
g \colon \mathrm{Skew}(Q_s(A)/Z,*) &\to \SDer A \\
\overline{q} &\mapsto \Ad q
\end{align*}
which is an isomorphism of Lie algebras. It follows that
\begin{equation}\label{ecuacion21}
Q_m(L) \cong K_s/Z_{K_s} 
\end{equation}
where $K_s =  \mathrm{Skew}(Q_s(A),\ast)$.

Therefore, we also have
\begin{equation}\label{ecuacion22}
Q_m(L_0) \cong K^0_s/Z_{K^0_s} 
\end{equation}
where $K^0_s = \mathrm{Skew}(Q_s(A_0),\ast).$

From Theorem \ref{associativeisomorphism} we deduce that $(K_s)_0 \cong K^0_s$. Use this together with equations \eqref{ecuacion21} and \eqref{ecuacion22} to conclude that $Q_m(L_0) \cong Q_m(L)_0$.
\end{proof}

Let $L$ be a Lie algebra. A subalgebra $I$ of $L$ is said to be an \emph{inner ideal} if $[I,[I,L]] \subseteq I$. A nonzero inner ideal $I$ is said to be a \emph{minimal inner ideal} if $I$ does not contain strictly nonzero inner ideals of $L$. An \emph{abelian minimal inner ideal} is a minimal inner ideal $I$ such that $[I,I] = 0$.

\begin{thm}\label{casosemisimple}
Let $L$ be a simple $(2n+1)$-graded strongly nondegenerate Lie algebra over a field of characteristic different from $2$ and $3$ containing abelian minimal inner ideals. Then $Q_m(L_0) \cong Q_m(L)_0$.
\end{thm}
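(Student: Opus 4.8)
The plan is to identify $L$ through structure theory and then reduce to Theorems~\ref{lieisomorphism} and~\ref{lieisomorphism2}. A simple strongly nondegenerate Lie algebra containing an abelian minimal inner ideal has nonzero socle, and since the socle is an ideal and $L$ is simple, the socle is all of $L$; hence $L$ is governed by the classification of simple (strongly) nondegenerate Lie algebras that coincide with their socle over a field of characteristic $\ne 2,3$. That classification produces, up to isomorphism, three kinds of algebra: (a) $[A,A]/Z_A$ for a simple associative algebra $A$; (b) $[K_A,K_A]/Z_{[K_A,K_A]}$ for a simple associative algebra $A$ with involution; and (c) certain finite-dimensional exceptional simple Lie algebras. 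In cases (a) and (b) the statement will follow from Theorems~\ref{lieisomorphism} and~\ref{lieisomorphism2} respectively, once the $(2n+1)$-grading of $L$ has been transferred to $A$; case (c) will be handled separately.

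For the transfer in case (a) --- case (b) being identical after replacing $A$ by $K_A$ and $\Der A$ by $\SDer A$ --- I would reason as follows. Since $L$ is simple it has no proper essential ideals, so $Q_m(L)\cong\Der L$ by Remark~\ref{noideals}, and Proposition~\ref{QAA=QA} applied to the simple algebra $A$ gives $Q_m(L)\cong\Der L\cong Q_s(A)^-/Z_{Q_s(A)}\cong\Der A$. The grading derivation $d$ of $L$, acting on $L_k$ by multiplication by $k$, is a degree-zero derivation; carried through these isomorphisms it becomes $\ad_q$ for a suitable $q\in Q_s(A)$, and $\ad_q$ is an associative derivation of $Q_s(A)$ whose eigenvalues, being those of $d$ on the subquotient $L$, all lie in $\{-n,\ldots,n\}$. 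Its eigenspace decomposition is then a $(2n+1)$-grading of $Q_s(A)$ restricting to $A$ (since $\ad_q$, coming from a derivation of $A$, preserves $A$) and inducing the original grading of $L$; because $A$ is simple, this grading is induced by a complete system of orthogonal idempotents of $Q_s(A)$ by Proposition~\ref{idempotentesQs}, so the hypotheses of Theorem~\ref{lieisomorphism} (resp.\ Theorem~\ref{lieisomorphism2}) are met, and that theorem yields $Q_m(L)_0\cong Q_m(L_0)$.

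In the exceptional case (c), $L$ is finite-dimensional, so $Q_m(L)\cong\Der L$ by Remark~\ref{noideals}, and both $Q_m(L)_0$ and $Q_m(L_0)$ can then be computed by hand --- for the latter, after disposing of the centre of the reductive algebra $L_0$ in the same manner as the associative centres are disposed of in Theorems~\ref{lieisomorphism} and~\ref{lieisomorphism2} --- so that the statement is verified directly for the finite explicit list of exceptional algebras carrying a $(2n+1)$-grading compatible with an abelian minimal inner ideal.

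I expect the main obstacle to be the grading-transfer step: upgrading the abstract isomorphism $L\cong[A,A]/Z_A$ coming from the classification to a \emph{graded} isomorphism. The delicate points are verifying that the grading derivation of $L$ corresponds to an \emph{inner} derivation $\ad_q$ of $Q_s(A)$ which is diagonalizable with integer eigenvalues confined to $\{-n,\ldots,n\}$ --- so that it produces a genuine $\Z$-grading of $A$ with the correct support rather than merely a filtration --- and checking that the grading of $A$ thus obtained is precisely one induced by idempotents of $Q_s(A)$ in the sense of Proposition~\ref{idempotentesQs} and Theorem~\ref{associativeisomorphism}, which is the exact setting in which Theorems~\ref{lieisomorphism} and~\ref{lieisomorphism2} were proved. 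By comparison, case (c) amounts to a finite verification.
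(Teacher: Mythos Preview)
Your approach is essentially the paper's: invoke the classification from \cite[Theorem 5.1]{dfgg} to split into three cases and then appeal to Theorems~\ref{lieisomorphism} and~\ref{lieisomorphism2} for the associative and involutive cases. Two differences are worth noting. First, for the finite-dimensional case the paper is much quicker than your proposed ``direct verification'': it simply cites \cite[Lemma 3.9]{msm} to conclude $Q_m(L)=L$ (and hence $Q_m(L)_0=L_0=Q_m(L_0)$), so no case-by-case computation of $\Der L$ or handling of the centre of $L_0$ is needed. Second, the grading-transfer problem you single out as the main obstacle --- upgrading the ungraded isomorphism $L\cong[A,A]/Z_A$ to one compatible with a $(2n+1)$-grading of $A$ so that Theorem~\ref{lieisomorphism} genuinely applies --- is not addressed at all in the paper's proof; it simply writes ``apply Theorem~\ref{lieisomorphism}'' and moves on. So your extra work with the grading derivation and $\ad_q$ is either filling a real gap the paper leaves implicit, or is superfluous because the cited structure theorem in \cite{dfgg} already delivers a graded model; the paper does not make clear which.
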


\begin{proof}
Since $L$ simple it is isomorphic to one of the Lie algebras $(i), (ii)$ or $(iii)$ of \cite[Theorem 5.1]{dfgg}. If $L$ is as in $(i)$ it is finite dimensional, and then by \cite[Lemma 3.9]{msm} it is trivially true as $Q_m(L) = L$. If $L$ is as in $(ii)$ apply Theorem \ref{lieisomorphism} and if $L$ is as in $(iii)$ apply Theorem \ref{lieisomorphism2}. In each case $Q_m(L_0) \cong Q_m(L)_0$.
\end{proof}


\section{Finitary simple Lie algebras}

In this section we will apply our results to some well known Lie algebras, the so called finitary simple Lie algebras.

Let $V$ be a vector space over a field $F$. An element $x\in \End_F V$ is called {\it finitary} if $\dim xV < \infty$. The set $\mathfrak{fgl}(V)$ consisting of all finitary endomorphisms of $V$ is an ideal of the Lie algebra $\mathfrak{gl}(V):= {\left(\End_F V\right)}^-$. A Lie algebra is called {\it finitary} if it is isomorphic to a subalgebra of $\mathfrak{fgl}(V)$ for some $V$. Finitary Lie algebras were studied by Baranov in \cite{Baranov}, where a classification of infinite dimensional finitary simple Lie algebras over a field of characteristic $0$ is given.

In this section we restrict our attention to a field $F$ of characteristic zero and algebraically closed. From \cite[Corollary 1.2]{Baranov} it  follows that any finitary simple Lie algebra of infinite countable dimension over $F$ is isomorphic to one of the following algebras: 

\begin{enumerate} [{\rm (i)}]
\itemsep=2mm
\item The special Lie algebra $\mathfrak{sl}_\infty(F)$,
\item The orthogonal Lie algebra $\mathfrak{o}_\infty(F)$,
\item The simplectic Lie algebra $\mathfrak{sp}_\infty(F)$.
\end{enumerate}

Where these algebras are defined as follows: let $\M_{\infty}(F)=\cup^\infty_{n=1} \M_n(F)$ be the algebra of infinite matrices with a finite number of nonzero entries, and $\mathfrak{gl}_{\infty}(F)= \M_{\infty}(F)^-$ be its associated Lie algebra. Set
\[y= \rm{diag} \left(
\left(\begin{matrix} 0 & 1 \\ -1 & 0
\end{matrix}\right), 
\left(\begin{matrix} 0 & 1 \\ -1 & 0
\end{matrix}\right),
\dots,
\left(\begin{matrix} 0 & 1 \\ -1 & 0
\end{matrix}\right),
\dots
\right)
\]
Then:
\begin{align*}
\mathfrak{sl}_{\infty}(F) & =\{ x	\in \mathfrak{gl}_{\infty}(F) \ \vert \   \tr(x)=0\}, 
\\
\mathfrak{0}_{\infty}(F) & =\{ x	\in \mathfrak{gl}_{\infty}(F) \ \vert \   x^t + x =0\}, 
\\ 
\mathfrak{sp}_{\infty}(F) & =\{ x \in \mathfrak{gl}_{\infty}(F) \ \vert \   x^t y + yx =0\}. 
\end{align*}

\begin{prop} \label{finitary_grading}
The special, the orthogonal and the simplectic Lie algebras admit a 3-grading $L=L_{-1}\oplus L_0 \oplus L_1$ such that $L_0 = [L_{-1}, L_1]$.
\end{prop}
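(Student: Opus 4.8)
The plan is to construct the $3$-grading explicitly for each of the three families using the natural block decomposition of the matrices, and then to check in each case that $L_0 = [L_{-1}, L_1]$. For $\slinear_\infty(F)$, one takes a fixed index, say $1$, and writes an infinite matrix in block form according to the splitting $\{1\} \cup (\N \setminus \{1\})$; declaring $L_1$ to consist of matrices supported on the $(1, \N\setminus\{1\})$-block, $L_{-1}$ on the $(\N\setminus\{1\}, \{1\})$-block, and $L_0$ on the diagonal blocks intersected with $\slinear_\infty(F)$, one obtains a $3$-grading because the product of an off-diagonal block with itself vanishes (the $(1,\cdot)$ and $(\cdot,1)$ blocks multiply into the full matrix but squares of each one-sided block are zero). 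For $\ort_\infty(F)$ and $\simplectic_\infty(F)$ one cannot isolate a single index, because the forms pair index $2k-1$ with $2k$; instead one groups the first pair of basis vectors together, using the splitting into the $2$-dimensional subspace spanned by $e_1, e_2$ and its complement, so that the grading is $L_i = \{x \in L : x(W_j) \subseteq W_{i+j}\}$ for the flag $0 \subseteq W_1 = \langle e_1 \rangle \subseteq W_2 = \langle e_1, e_2\rangle$, compatibly with the involution $x \mapsto \pm x^t$ (resp. $x \mapsto -y^{-1}x^t y$) which must send $L_i$ to $L_{-i}$; concretely $L_{\pm 1}$ are the appropriate one-sided blocks cut out by the skew-symmetry condition and $L_0$ is the block-diagonal part intersected with the classical Lie algebra.

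First I would fix the block notation and simply write down the three subspaces $L_{-1}, L_0, L_1$ in each case, then verify $[L_i, L_j] \subseteq L_{i+j}$ (with $L_k = 0$ for $|k| > 1$), which is immediate from block matrix multiplication together with the fact that the defining condition ($\tr = 0$, resp. $x^t = -x$, resp. $x^t y = -yx$) is homogeneous for this decomposition. Next I would check $L_{-1} \oplus L_0 \oplus L_1 = L$: every element of $L$ decomposes into its block components and each component again satisfies the defining condition, so lies in the appropriate piece. The remaining point is $L_0 = [L_{-1}, L_1]$. For $\slinear_\infty$ this is a direct computation: the bracket of $E_{1j}$ with $E_{k1}$ produces $E_{11}\delta_{jk} - E_{kj}$, and letting $j, k$ range one obtains all of $\gl$ on the complementary block plus the rank-one diagonal adjustments, which together span exactly the trace-zero part of the block-diagonal; since $\M_\infty(F)$ has only finitely many nonzero entries, finite sums suffice and no completion issue arises. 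For $\ort_\infty$ and $\simplectic_\infty$ the same kind of computation works, bracketing the elementary skew (resp. $y$-skew) off-diagonal blocks against each other and observing that one recovers the full $\ort$ (resp. $\simplectic$) on the complementary space together with the $\gl$-action of the $2\times 2$ block, which is exactly $L_0$.

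The main obstacle I expect is the verification $L_0 = [L_{-1}, L_1]$ in the orthogonal and symplectic cases, where one has to be careful that the skew-symmetry constraints on $L_{\pm 1}$ do not make the bracket too small: one needs that the map $L_{-1} \times L_1 \to L_0$ is surjective, and because $L_{\pm 1}$ carry a constraint linking the two halves of the block, one must check that the images still span the full diagonal $\ort$ (resp. $\simplectic$) on the complement and the full $\slinear_2$ (resp. $\gl_1$ or the relevant piece) on the distinguished block. This is handled by choosing enough elementary-type elements: for each pair of indices $p, q$ in the complement one exhibits an explicit pair $(u, v) \in L_{-1} \times L_1$ with $[u,v]$ equal to the corresponding elementary skew element modulo lower-rank terms, then proceeds by an inductive rank argument. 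I do not anticipate any difficulty with $L_0 = [L_{-1}, L_1]$ for $\slinear_\infty$, nor with the grading axioms themselves; only the surjectivity of the bracket in the presence of the bilinear-form constraints requires genuine, if routine, checking.
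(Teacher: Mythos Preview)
Your treatment of $\slinear_\infty(F)$ is fine and is essentially the paper's construction with $n=1$: a block splitting of the index set gives the $3$-grading, and the bracket computation you sketch does yield $L_0=[L_{-1},L_1]$.

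For $\ort_\infty(F)$ and $\simplectic_\infty(F)$, however, the plan breaks down. First a factual slip: in the paper $\ort_\infty(F)=\{x:x^t+x=0\}$ is taken with respect to the standard \emph{orthonormal} form, so each $e_i$ pairs with itself; the pattern ``$e_{2k-1}$ pairs with $e_{2k}$'' is the symplectic one only. More importantly, splitting off $\{e_1,e_2\}$ in the original basis and taking $L_1$ to be the upper off-diagonal block ``cut out by the skew-symmetry condition'' yields $L_1=0$. Indeed, if $x=\left(\begin{smallmatrix}0&B\\0&0\end{smallmatrix}\right)$ satisfies $x^t=-x$ then $B=0$; in the symplectic case $x^t y+yx=0$ with $y$ block-diagonal in $2\times 2$ blocks forces $C=-J^{-1}B^tJ'$, so again a matrix supported on a single off-diagonal block must vanish. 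Thus your ``$3$-grading'' collapses to $L=L_0$. The flag $0\subset\langle e_1\rangle\subset\langle e_1,e_2\rangle$ does not rescue this: it is not an isotropic flag for the orthogonal form (no $e_i$ is isotropic there), and in the symplectic case $\langle e_1,e_2\rangle$ is nondegenerate rather than equal to $\langle e_1\rangle^\perp$, so the filtration it defines is not compatible with the form and does not induce a short grading on the skew elements.

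What is missing is precisely the step the paper carries out: a change to a \emph{hyperbolic} basis. Over the algebraically closed field one replaces $\{e_i\}$ by $u_k,v_k$ with $f(u_k,v_l)=\delta_{kl}$ and $f(u_k,u_l)=f(v_k,v_l)=0$ (in the orthogonal case this uses $u_k=\tfrac{1}{\sqrt2}(e'_{2k-1}+ie'_{2k})$, $v_k=\tfrac{1}{\sqrt2}(e'_{2k-1}-ie'_{2k})$); then both $\ort_\infty$ and $\simplectic_\infty$ consist of matrices $\left(\begin{smallmatrix}a&b\\c&-a^t\end{smallmatrix}\right)$ with $a\in\gl_\infty(F)$ and $b,c$ skew (resp.\ symmetric), and the obvious block decomposition \emph{is} a $3$-grading with $L_0\cong\gl_\infty(F)$. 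The key structural point you are missing is that a $3$-grading of $\ort$ or $\simplectic$ corresponds to a choice of maximal isotropic subspace, not to an orthogonal splitting into nondegenerate pieces. Once this basis is in place, $L_0=[L_{-1},L_1]$ is the routine check you anticipated.
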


\begin{proof}
Fix a nonzero natural number $n$. Then one can easily show that the decomposition $\mathfrak{sl}_{\infty}(F)=\mathfrak{sl}_{\infty}(F)_{-1}\oplus \mathfrak{sl}_{\infty}(F)_0 \oplus \mathfrak{sl}_{\infty}(F)_1$, where
\begin{align*}
\mathfrak{sl}_{\infty}(F)_{-1} = &
\left \{
\left (
\begin{array}{cc} 
0 & 0 \\
c & 0
\end{array}
\right) \ \vert \  \, c\in \M_{\infty, \, n}(F) \right \} \cap \mathfrak{sl}_{\infty}(F), 
\\
\mathfrak{sl}_{\infty}(F)_{0} \, \, \, = &
\left \{
\left (
\begin{array}{cc} 
a & 0 \\
0 & d
\end{array}
\right) \ \vert \  \, a\in \M_n(F)
,\, d \in \mathfrak{gl}_{\infty}(F) \, \mbox{ such that } \, \tr (a) + \tr (d) = 0 \right\},
\\
\mathfrak{sl}_{\infty}(F)_{1} \, \, \, = &
\left \{
\left (
\begin{array}{cc} 
0 & b \\
0 & 0
\end{array}
\right) \ \vert \  \, b\in \M_{n,\, \infty}(F) \right \} \cap \mathfrak{sl}_{\infty}(F) 
\end{align*} 
is a 3-grading of $\mathfrak{sl}_{\infty}(F)$ satisfying that $\mathfrak{sl}_{\infty}(F)_0 = [\mathfrak{sl}_{\infty}(F)_{-1}, \mathfrak{sl}_{\infty}(F)_1]$.

Recall that the orthogonal  Lie algebra can be described in an alternative way:
let $V$ be a countable dimensional complex vector space. Set a symmetric bilinear map $f\colon V\times V\to F$, and $\{e_i\}_{i\ge 1}$ an orthonormal basis of $V$ relative to $f$. We have that the orthogonal Lie algebra $\mathfrak{o}_\infty(F)$ is isomorphic to the following one:
\[
\mathfrak{o}(V,f):=\{T\in \mathfrak{fgl}(V) \ \vert \  f(T(x),y)+f(x,T(y))=0, \forall \, x,y\in V\}.
\]
Next, we introduce a new basis of $V$ by doing $e'_i:= \sqrt{-1}e_i$ for all $i$. Then we have $f(e'_i, e'_j) = \delta_{ij}$, where $\delta_{ij}$ denotes Kronecker's delta. Using this basis we can give a matrix representation of $\mathfrak{o}(V,f)$ such that the elements of $\mathfrak{o}(V,f)$ are represented by skew-symmetric matrices with a finite number of nonzero rows and columns. However, in order to describe a $\Z$-grading on $\mathfrak{o}(V,f)$ we shall use a slightly different basis of $V$. Define: 
\begin{align*}
u_1 := \frac{1}{\sqrt 2}&(e'_1 + i e'_2), \qquad \qquad v_1 := \frac{1}{\sqrt 2}(e'_1 - i e'_2) 
\\ 
& \vdots \qquad \qquad \qquad \qquad \qquad \qquad \vdots 
\\
u_n := \frac{1}{\sqrt 2}&(e'_{2n-1} + i e'_{2n}), \qquad v_n := \frac{1}{\sqrt 2}(e'_{2n-1} - i e'_{2n}) 
\\ 
& \vdots \qquad \qquad \qquad \qquad \qquad \qquad \vdots
\end{align*}
Consider the basis $\{u_1, u_2, \cdots; v_1, v_2, \cdots \}$ of $V$. Then $f(u_i, v_i) = 1$ and $f(u_i, u_j) = f(v_i, v_j)= 0$ for $i\neq j$; moreover, $f(u_i, v_j) = 0$ for all $i, j$. Now, given $T\in \mathfrak{o}(V,f)$ writing 
\[ 
T(u_i) = \sum_j \alpha_{ij}u_j + \sum_j \beta_{ij}v_j, \quad T(v_i) = \sum_j \gamma_{ij}u_j + \sum_j \epsilon_{ij}v_j
\]
we obtain $\epsilon_{ij} = - \alpha_{ji}, \, \beta_{ij} = - \beta_{ji}$, and $\gamma_{ij} = - \gamma_{ji}$. Hence, $\mathfrak{o}(V,f)$ admits the following matrix representation 
\[
\left \{ 
\left (
\begin{array}{cc} 
a & b \\
c & -a^t
\end{array}
\right) \ \vert \  \, a\in \mathfrak{gl}_{\infty}(F), \, \mbox{ and } \, b, c \in \sk_{\infty}(F)
\right \},
\]
where $\sk_{\infty}(F) := \{x\in \mathfrak{gl}_{\infty}(F) \ \vert \  x^t = -x\}.$ In the sequel we identify $\mathfrak{o}(V,f)$ with this Lie algebra. Now, it is easy to check that $\mathfrak{o}(V,f) = \mathfrak{o}(V,f)_{-1} \oplus \mathfrak{o}(V,f)_0 \oplus \mathfrak{o}(V,f)_1$ is a 3-grading of $\mathfrak{o}(V,f)$, where 
\[
\mathfrak{o}(V,f)_{-1}   = \,  
\left (
\begin{array}{cc} 
0 & 0 \\
c & 0
\end{array}
\right), \quad 
\mathfrak{o}(V,f)_0  = \, 
\left (
\begin{array}{cc} 
a & 0 \\
0 & -a^t
\end{array} 
\right), \quad
\mathfrak{o}(V,f)_1  = \, 
\left (
\begin{array}{cc} 
0 & b \\
0 & 0
\end{array}
\right),
\]
for $a \in \mathfrak{gl}_{\infty}(F)$ and $b, \, c \in \sk_{\infty}(F)$. Moreover, $\mathfrak{o}(V,f)_0 = [\mathfrak{o}(V,f)_{-1}, \mathfrak{o}(V,f)_1]$.
 \medskip
 
It remains to show that the simplectic Lie algebra $\mathfrak{sp}_{\infty}(F)$ admits a 3-grading. To this end, take an alternate bilinear map $g:V \times V \to F$ such that there exists a basis $\{u_1, u_2, \ldots; v_1, v_2, \ldots \}$ of $V$ satisfying that $g(u_i, v_i) = 1$, $g(u_i, u_j) = g(v_i, v_j)= 0$ for all $i, \, j$, and $g(u_i, v_j) = 0$ for all $i\neq j$. Reasoning as above, we can give a matrix representation of the simplectic Lie algebra 
\[
\mathfrak{sp}_{\infty}(F) \cong \mathfrak{sp}(V,f):=\{T\in \mathfrak{fgl}(V) \ \vert \  g(T(x),y)+g(x,T(y))=0, \forall \, x,y\in V\},
\]
as the Lie algebra 
\[
\left \{ 
\left (
\begin{array}{cc} 
a & b \\
c & -a^t
\end{array}
\right) \ \vert \  \, a\in \mathfrak{gl}_{\infty}(F), \, \mbox{ and } \, b, c \in \sym_{\infty}(F)
\right \},
\]
where $\sym_{\infty}(F) := \{x\in \mathfrak{gl}_{\infty}(F) \ \vert \  x^t = x\}.$ Identifying $\mathfrak{sp}(V,f)$ with this Lie algebra, it is easy to see that $\mathfrak{sp}(V,f) = \mathfrak{sp}(V,f)_{-1} \oplus \mathfrak{sp}(V,f)_0 \oplus \mathfrak{sp}(V,f)_1$ gives a 3-grading of $\mathfrak{sp}(V,f)$ which satisfies $\mathfrak{sp}(V,f)_0 = [\mathfrak{sp}(V,f)_{-1}, \mathfrak{sp}(V,f)_1]$, where 
\[
\mathfrak{sp}(V,f)_{-1}   = \,  
\left (
\begin{array}{cc} 
0 & 0 \\
c & 0
\end{array}
\right), \quad 
\mathfrak{sp}(V,f)_0  = \, 
\left (
\begin{array}{cc} 
a & 0 \\
0 & -a^t
\end{array} 
\right), \quad
\mathfrak{sp}(V,f)_1  = \, 
\left (
\begin{array}{cc} 
0 & b \\
0 & 0
\end{array}
\right),
\]
for $a \in \mathfrak{gl}_{\infty}(F)$ and $b, \, c \in \sym_{\infty}(F)$.
\end{proof}

\begin{thm}
The special, the orthogonal and the simplectic Lie algebras are graded strongly nondegenerate Lie algebras with the grading given in Proposition \ref{finitary_grading}. Moreover, they are strongly nondegenerate Lie algebras.
\end{thm}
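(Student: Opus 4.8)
The plan is to combine the 3-grading constructed in Proposition \ref{finitary_grading} with the inheritance result of Proposition \ref{herencia1}. Each of the three Lie algebras $L \in \{\mathfrak{sl}_\infty(F), \mathfrak{o}_\infty(F), \mathfrak{sp}_\infty(F)\}$ is simple (being finitary simple by \cite{Baranov}), so $Z(L) = 0$; moreover $F$ has characteristic $0$, so $\Phi = F$ is $2$- and $3$-torsion free; and by Proposition \ref{finitary_grading} the 3-grading $L = L_{-1} \oplus L_0 \oplus L_1$ satisfies $L_0 = [L_{-1}, L_1]$. Thus all the hypotheses of Proposition \ref{herencia1} are in place, \emph{provided} we can show that the zero component $L_0$ is a strongly nondegenerate Lie algebra. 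Once that is established, Proposition \ref{herencia1} gives that $L$ is graded strongly nondegenerate, and then Proposition \ref{grequiv} (applied with $G = \mathbb{Z}$, which is an ordered group, and a finite grading) upgrades this to strong nondegeneracy. This yields the final sentence of the statement for free.

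So the real work is identifying $L_0$ in each case and checking it is strongly nondegenerate. Reading off the matrix descriptions in Proposition \ref{finitary_grading}: for $\mathfrak{sl}_\infty(F)$ the zero component consists of block-diagonal matrices $\diag(a,d)$ with $a \in \M_n(F)$, $d \in \gl_\infty(F)$, $\tr(a) + \tr(d) = 0$; for $\mathfrak{o}_\infty(F)$ and $\mathfrak{sp}_\infty(F)$ the zero component is $\{\diag(a, -a^t) \mid a \in \gl_\infty(F)\} \cong \gl_\infty(F)$ as a Lie algebra. In all three cases $L_0$ is (isomorphic to) a Lie algebra of the form $\gl_\infty(F)$, or $\mathfrak{sl}_n(F) \oplus \gl_\infty(F)$ modulo the trace relation — in any event a direct sum / subalgebra built from finite and infinite general linear Lie algebras. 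The strategy for strong nondegeneracy is: a finite-dimensional reductive Lie algebra in characteristic $0$ has trivial quadratic annihilator when its center is appropriately controlled, and $\gl_\infty(F) = \M_\infty(F)^-$ is the associated Lie algebra of a (von Neumann regular, hence semiprime) associative algebra, for which absolute zero divisors are handled directly: if $x \in \gl_\infty(F)$ satisfies $(\ad x)^2 = 0$ then $x^2 a - 2 x a x + a x^2 = 0$ for all $a$, and a short computation in the associative algebra $\M_\infty(F)$ (which is semiprime) forces $x = 0$ up to a central element, but $\M_\infty(F)$ has no nonzero central elements, so $x = 0$. For the $\mathfrak{sl}_\infty$ case one must be slightly more careful because of the trace condition tying the two blocks together, but the same associative computation applies componentwise.

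The step I expect to be the main obstacle is pinning down the strong nondegeneracy of $L_0$ cleanly, since $L_0$ is an infinite-dimensional non-simple Lie algebra (it has a center in the $\gl_\infty$ description, namely the scalar matrices, except that in $\M_\infty(F)$ there are no nonzero scalar matrices with finitely many nonzero entries — this is the point that makes it work). One should verify that $\QAn(L_0) = 0$ by working inside the semiprime associative algebra $\M_\infty(F)$ (which is simple, or at least prime and von Neumann regular), invoking the standard fact that a semiprime associative algebra $A$ has $A^-$ strongly nondegenerate modulo its center, and observing that the center of $\M_\infty(F)$ is zero. The $\mathfrak{sl}_\infty$ subtlety is minor: $L_0 \subseteq \gl_n(F) \oplus \gl_\infty(F)$ is an ideal-like subalgebra, and an absolute zero divisor of $L_0$ projects to absolute zero divisors of each factor, each of which must vanish; hence $L_0$ is strongly nondegenerate. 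With $L_0$ strongly nondegenerate in hand, the conclusion follows immediately from Propositions \ref{herencia1} and \ref{grequiv}, with no further computation required.
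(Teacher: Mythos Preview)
Your approach is essentially the same as the paper's: reduce via Proposition~\ref{herencia1} to the strong nondegeneracy of $L_0$, identify $L_0$ in each case, handle the orthogonal and symplectic cases through the isomorphism $L_0 \cong \gl_\infty(F)$ (whose strong nondegeneracy follows from the semiprimeness of $\M_\infty(F)$ and the vanishing of its center), and treat $\mathfrak{sl}_\infty(F)$ by a direct block computation. The final appeal to Proposition~\ref{grequiv} is fine, though note that Proposition~\ref{herencia1} already delivers both the graded and ungraded conclusions simultaneously.

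There is one genuine slip in your $\mathfrak{sl}_\infty$ argument. You write that an absolute zero divisor $x = \diag(a_0,d_0)$ of $L_0$ ``projects to absolute zero divisors of each factor, each of which must vanish''. The projection to $\gl_\infty(F)$ does vanish, since that algebra has trivial center; but the projection to $\gl_n(F) = \M_n(F)^-$ only forces $a_0 \in Z(\M_n(F)) = F\cdot 1_n$, not $a_0 = 0$. You still need one more step to kill the scalar $\lambda$: since $x \in L_0$ we have $\tr(a_0) + \tr(d_0) = 0$, and with $d_0 = 0$ this gives $n\lambda = 0$, hence $\lambda = 0$. The paper closes this gap by a different route, passing through the off-diagonal block of $[x,[x,y]]$ to obtain $a_0^2 b = 0$ for all $b \in \M_{n,\infty}(F)$ and hence $\lambda^2 = 0$; the trace argument is arguably cleaner. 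Either way, once this point is addressed your proof is complete and matches the paper's.
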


\begin{proof}
By Proposition \ref{herencia1}, we only have to show that the zero components of the 3-gradings established in Proposition \ref{finitary_grading} are strongly nondegenerate. We first prove that the Lie algebra $\mathfrak{gl}_{\infty}(F)$ is strongly nondegenerate. 
This is true as follows: it is well-known that $\M_\infty (F)$ is a von Neumann regular associative algebra, and therefore it is semiprime. Since $Z(\M_\infty (F)) = 0$, we can apply \cite[Lemma 4.2]{dfgg} which implies the strong non-degeneracy of $\mathfrak{gl}_{\infty}(F)$. 

Next, we consider the special Lie algebra $L:=\mathfrak{sl}_{\infty}(F)$. We claim that $\QAn_{L_0}(L_0) = 0$; in fact, given
\[
x = \left (
\begin{array}{cc} 
a_0 & 0 \\
0 & d_0
\end{array} 
\right) \in \QAn_{L_0}(L_0), \mbox{ and } y = \left (
\begin{array}{cc} 
a & b \\
c & d
\end{array} 
\right) \in L.
\]
We have
\begin{align*}
[x, y] & = 
\left (
\begin{array}{cc} 
[a_0, a] & a_0b - bd_0 \\
d_0c - ca_0 & [d_0, d]
\end{array} 
\right), \\
[x, [x, y]] & = \left (
\begin{array}{cc} 
[a_0, [a_0, a]] & a^2_0b - 2a_0bd_0 + bd^2_0 \\
d^2_0c - 2d_0ca_0 + cd^2_0 & [d_0,[d_0, d]]
\end{array} 
\right),
\end{align*} 
Then from $[x,[x,y]]= 0$, we obtain $[a_0, [a_0, a]] = [d_0,[d_0, d]] = 0$ for all $a\in \M_n(F)$, $d \in \mathfrak{gl}_{\infty}(F)$. Thus $a_0 = \lambda 1_n$ for some $\lambda \in \C$, and $d_0 = 0$ by the strong non-degeneracy of $\mathfrak{gl}_{\infty}(F)$. This implies that $a^2_0b=0$, and therefore $\lambda^2 b=0$ for all $b\in \M_{n, \infty}(F)$. Hence, $\lambda = 0$, and $x=0$, as desired.

To finish, we deal with the orthogonal and simplectic Lie algebras. Note that the zero component of the 3-gradings stated in Proposition \ref{finitary_grading} have the same zero component: 
\[
\left \{ 
\left (
\begin{array}{cc} 
a & 0 \\
0 & -a^t
\end{array}
\right) \ \vert \  \, a\in \mathfrak{gl}_{\infty}(F)
\right \},
\]
which is a strongly nondegenerate Lie algebra since $\mathfrak{gl}_{\infty}(F)$ is so. The result now follows.
\end{proof}

We finish the section by computing the maximal algebra of quotients of the zero component of the finitary simple Lie algebras described above.

Recall that $\gl_{\infty}(F)_{rcf}$ denotes the algebra of infinite matrices over $F$ with a finite number of entries in each row and each column. Note that $\gl_{\infty}(F) \subsetneq \gl_{\infty}(F)_{rcf}$.

\begin{thm}
Consider the Lie algebras $\mathfrak{sl}_\infty(F), \mathfrak{o}_\infty(F)$ or $\mathfrak{sp}_\infty(F)$ with the $\Z$-grading defined in this section. Consider also the zero component of each of these Lie algebras. The maximal algebra of quotients of each of these algebras is:
\begin{align*} 
&Q_m(\slinear_{\infty}(F)) \cong \gl_{\infty}(F)_{rcf}/F\cdot 1, \\
&Q_m(\ort_{\infty}(F)) \cong \{x \in \gl_{\infty}(F)_{rcf} : x^t+x=0 \},  \\
&Q_m(\simplectic_{\infty}(F)) \cong \{x \in \gl_{\infty}(F)_{rcf} : x^ty+yx=0 \},\\
& Q_m \left(\left \{
\left (
\begin{array}{cc} 
a & 0 \\
0 & d
\end{array}
\right) \ \vert \  \, a\in \M_n(F)
,\, d \in \mathfrak{gl}_{\infty}(F),  \tr (a) + \tr (d) = 0 \right\}\right) \cong (\gl_{\infty}(F)_{rcf}/F\cdot 1)_0, \\
& Q_m \left(\left \{
\left (
\begin{array}{cc} 
a & 0 \\
0 & -a^t
\end{array}
\right) \ \vert \  
\, a \in \mathfrak{gl}_{\infty}(F) \right\}\right) \cong (\{x \in \gl_{\infty}(F)_{rcf} \ \vert \  x^t+x=0 \})_0.
\end{align*}
\end{thm}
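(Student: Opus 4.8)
The plan is to reduce each assertion to the associative computations already in hand. For the three ``full'' finitary algebras $\slinear_\infty(F)$, $\ort_\infty(F)$, $\simplectic_\infty(F)$ I would invoke the structure theory used earlier: each of these is a Lie algebra of the form $[A,A]/Z$ (or $[K_A,K_A]/Z$) for a suitable simple associative algebra $A$, namely $A=\M_\infty(F)$ in the special case and $A=\M_\infty(F)$ with the appropriate (orthogonal, symplectic) involution in the other two. By Proposition \ref{QAA=QA} and Proposition \ref{QKK=QK}, respectively, $Q_m$ of such a Lie algebra is $Q_s(A)^-/Z_{Q_s(A)}$ or $K_{Q_s(A)}/Z_{K_{Q_s(A)}}$. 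So the first step is simply to identify $Q_s(\M_\infty(F))$. This is classical: the Martindale symmetric ring of quotients of $\M_\infty(F)$ is the algebra of row-and-column-finite matrices $\gl_\infty(F)_{rcf}$ (viewed associatively as $\M_\infty(F)_{rcf}$); one checks the two-sided ideal $\M_\infty(F)$ is essential in it and that $Iq+qI\subseteq \M_\infty(F)$ for every $q$ with finitely many nonzero rows or columns, plus the appropriate finiteness. Once that identification is made, the first three displayed isomorphisms drop out: $Q_s(A)^-/Z = \M_\infty(F)_{rcf}^-/F\cdot 1 = \gl_\infty(F)_{rcf}/F\cdot 1$, and in the orthogonal/symplectic cases the skew elements of $\M_\infty(F)_{rcf}$ are exactly $\{x\in\gl_\infty(F)_{rcf}: x^t+x=0\}$ resp. $\{x: x^ty+yx=0\}$, and one verifies these have trivial center (so no further quotient is needed).

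The last two displayed isomorphisms are about the zero components, and here I would use Theorem \ref{lieisomorphism} (and Theorem \ref{lieisomorphism2} in the orthogonal case) together with Theorem \ref{associativeisomorphism}. Concretely: the $3$-gradings of Proposition \ref{finitary_grading} are induced by an idempotent $e\in Q_s(A)$ (the projection onto the first $n$ coordinates, which lies in $\M_\infty(F)_{rcf}$ and satisfies $eA+Ae\subseteq A$); so by Theorem \ref{lieisomorphism}, $Q_m(L)_0\cong Q_m(L_0)$, and moreover the grading on $Q_m(L)$ is induced by the same idempotent. Therefore $Q_m(L_0)\cong Q_m(L)_0 = (\gl_\infty(F)_{rcf}/F\cdot 1)_0$ in the special case, which is the fourth line. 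For the orthogonal zero component, the same argument via Theorem \ref{lieisomorphism2} gives $Q_m(L_0)\cong (K_{Q_s(A)}/Z)_0 = (\{x\in\gl_\infty(F)_{rcf}: x^t+x=0\})_0$, the fifth line. (The symplectic zero component is omitted from the statement because it coincides as a Lie algebra with the orthogonal zero component, as observed in the previous theorem's proof.)

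The main obstacle I anticipate is the clean identification $Q_s(\M_\infty(F)) = \M_\infty(F)_{rcf}$, together with the bookkeeping that the grading-inducing idempotent $e$ genuinely lands in $Q_s(\M_\infty(F))$ and satisfies $eA+Ae\subseteq A$ so that Corollary \ref{Q(eAe)=eQe} and Theorem \ref{associativeisomorphism} apply. One must check: $\M_\infty(F)_{rcf}$ acts by both left and right multiplication on the essential ideal $\M_\infty(F)$ back into $\M_\infty(F)$ (true since a product of a row-or-column-finite matrix with a finite matrix is finite); conversely every element of $Q_s(\M_\infty(F))$ arises this way (this is where one uses that a left-$\M_\infty(F)$-module map on a dense ideal is given by right multiplication by a unique $rcf$-matrix); and the involutions on $\M_\infty(F)$ extend to $\M_\infty(F)_{rcf}$ so that $K_{Q_s(A)}$ is literally the skew $rcf$-matrices. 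After that, everything is assembly of Proposition \ref{QAA=QA}, Proposition \ref{QKK=QK}, Theorem \ref{lieisomorphism}, Theorem \ref{lieisomorphism2} and Theorem \ref{associativeisomorphism}, plus the trivial-center checks which amount to: a matrix in $\gl_\infty(F)_{rcf}$ commuting with all of $\M_\infty(F)$ is scalar, and a skew such matrix commuting with all skew finite matrices is zero.
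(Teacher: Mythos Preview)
Your approach is correct but differs from the paper's in a notable way for the first three isomorphisms. The paper argues much more directly: since each $L\in\{\slinear_\infty(F),\ort_\infty(F),\simplectic_\infty(F)\}$ is simple, Remark~\ref{noideals} gives $Q_m(L)\cong\Der(L)$, and then the identification of $\Der(L)$ with the stated matrix algebras is simply quoted from Neeb's computation of derivations of locally simple Lie algebras. You instead realize each $L$ as $[A,A]/Z$ or $[K_A,K_A]/Z$ for $A=\M_\infty(F)$ (with the relevant involution), invoke Propositions~\ref{QAA=QA} and~\ref{QKK=QK} to reduce to $Q_s(A)$, and then compute $Q_s(\M_\infty(F))=\M_\infty(F)_{rcf}$ by hand. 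Your route is more self-contained (it stays entirely within the machinery of the paper and avoids the external citation), at the cost of having to carry out the Martindale computation and verify the auxiliary identifications $[A,A]=\slinear_\infty(F)$, $K_A=\ort_\infty(F)$, trivial centers, etc. The paper's route is shorter but outsources the actual computation.

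For the two zero-component statements the approaches essentially coincide: the paper invokes Theorem~\ref{casosemisimple} (which packages Theorems~\ref{lieisomorphism} and~\ref{lieisomorphism2} together with the inner-ideal classification), while you go directly to Theorems~\ref{lieisomorphism} and~\ref{lieisomorphism2}. One small imprecision: your parenthetical ``projection onto the first $n$ coordinates'' describes the idempotent for the $\slinear_\infty$ grading, but for the orthogonal and symplectic gradings of Proposition~\ref{finitary_grading} the relevant idempotent is the projection onto the $\{u_i\}$ block, which has infinite rank (hence lies in $Q_s(A)$ but not in $A$); this does not affect the argument, since the block decomposition still defines a genuine $3$-grading on $A$ to which Theorem~\ref{lieisomorphism2} applies.
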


\begin{proof}

Let $L$ be one of the Lie algebras $\mathfrak{sl}_\infty(F), \mathfrak{o}_\infty(F)$ or $\mathfrak{sp}_\infty(F)$.
Since $L$ is simple, from Remark \ref{noideals} it follows $Q_m(L) \cong \Der (L)$. Now apply \cite[Theorem I.3]{neeb} in each case to get the desired result.

The other two cases are now a consequence of Theorem \ref{casosemisimple} where we have proved that $Q_m(L)_0 \cong Q_m(L_0)$.

\end{proof}

Note that, since $\ort_{\infty}(F)_0 \cong \simplectic_{\infty}(F)_0$ we have:

$$(\{x \in \gl_{\infty}(F)_{rcf} \ \vert \  x^ty+yx=0 \})_0 \cong (\{x \in \gl_{\infty}(F)_{rcf} \ \vert \  x^t+x=0 \})_0.$$


\end{document}